\newcommand{\bbH}{\mathbb{H}}
\newcommand{\bbL}{\mathbb{L}}
\newcommand{\N}{\mathbb{N}}
\newcommand{\R}{\mathbb{R}}
\newcommand{\bbS}{\mathbb{S}}
\newcommand{\calH}{{\mathcal H}}
\newcommand{\calL}{\mathcal{L}}
\newcommand{\calN}{\mathcal{N}}
\newcommand{\calV}{\mathcal{V}}
\newcommand{\bfX}{\mathbf{X}}
\newcommand{\g}{\mathfrak{g}}
\newcommand{\1}{\mathbbm{1}}
\newcommand{\Int}{\operatorname{Int}}
\newcommand{\norm}[1]{\Vert#1\Vert}
\newcommand{\Per}{\operatorname{Per}}
\newcommand{\spt}{\operatorname{spt}}
\newcommand{\scal}[1]{\langle#1\rangle}
\numberwithin{equation}{section}
\theoremstyle{plain}
\newtheorem{theorem}{Theorem}[section]
\newtheorem{lemma}[theorem]{Lemma}
\newtheorem{corollary}[theorem]{Corollary}
\theoremstyle{definition}
\newtheorem{definition}[theorem]{Definition}
\theoremstyle{remark}
\newtheorem{remark}[theorem]{Remark}
\title[Monotone sets and local minimizers for the perimeter]{Monotone sets and local minimizers for the perimeter in Carnot groups}
\author{S\'everine Rigot}
\address[Rigot]{Universit\'e C\^ote d'Azur, CNRS, LJAD, France}
\email{Severine.RIGOT@univ-cotedazur.fr}
\begin{document}

\begin{abstract}  Monotone sets have been introduced about ten years ago by Cheeger and Kleiner who reduced the proof of the non biLipschitz embeddability of the Heisenberg group into $L^1$ to the classification of its monotone subsets. Later on, monotone sets played an important role in several works related to geometric measure theory issues in the Heisenberg setting. In this paper, we work in an arbitrary Carnot group and show that its monotone subsets are sets with locally finite perimeter that are local minimizers for the perimeter. Under an additional condition on the ambient Carnot group, we prove that their measure-theoretic interior and support are precisely monotone. We also prove topological  and measure-theoretic properties of local minimizers for the perimeter whose interest is independent from the study of monotone sets. As a combination of our results, we get in particular a sufficient condition under which any monotone set admits measure-theoretic representatives that are precisely monotone.
 \end{abstract}

\maketitle

\section{Introduction}

Monotone sets have been introduced by Cheeger and Kleiner in \cite{MR2729270} where the proof of the non biLipschitz embeddability of the Heisenberg group into $L^1$ is reduced to the classification of its monotone subsets, see also \cite{MR2892612}. Later on, this classification played an important role in several works related to the study of locally finite perimeter sets and to geometric measure theory issues in the Heisenberg setting \cite{MR3815462,MR4127898,https://doi.org/10.48550/arxiv.2004.12522,https://doi.org/10.48550/arxiv.2105.08890}.

\smallskip

In this paper we will work in an arbitrary Carnot group. Our results concern both monotone sets and local minimizers for the perimeter. We will first prove that monotone sets are sets with locally finite perimeter that are local minimizers for the perimeter, Theorems~\ref{thm:monotone-implies-locally-finite-perimeter} and~\ref{thm:monotone-minimizer-for-perimeter}. Next, we prove topological and measure-theoretic properties of local minimizers for the perimeter, Theorem~\ref{thm:minimizer-for-perimeter}, whose interest is independent from the study of monotone sets. We will finally go back to monotone sets. We prove that under an additional assumption on the ambient Carnot group both the measure-theoretic interior and support of a monotone set are precisely monotone, see~(H) and Theorem~\ref{thm:int-spt-precisely-monotone}. As a consequence of our results, we get in particular that under condition~(H) any monotone set admits precisely monotone measure-theoretic representatives, Corollary~\ref{cor:equivalent-int-spt-precisely-monotone}, which was the initial motivation for the present paper. We will come back to this initial motivation later on in this introduction.

\smallskip

To describe our results, let us consider a Carnot group $G$. We denote by $\oplus_{i=1}^s \g_i$ a stratification of its Lie algebra of left-invariant vector fields, see Section~\ref{sect:preliminaries} for precise definitions, and by $\mu$ a Haar measure on $G$.  A line in $G$ is the unparameterized oriented image of an integral curve of a left-invariant vector field in $\g_1 \setminus \{0\}$. According to \cite[Definition~3.7]{MR2729270} a subset $E$ of $G$ is \textit{monotone} if it is $\mu$-measurable and for a.e. line $L$ the restriction of its characteristic function to $L$ agrees a.e. with a monotone function, equivalently, $L\cap E$ coincides up to a null subset of $L$ with a connected subset of $L$ that has connected complement in $L$, see Section~\ref{subsect:monotone-sets}.

\smallskip

We will first prove that monotone sets have locally finite perimeter, meaning that for any $X\in\g_1$ the distributional derivate of their characteristic function along $X$ is a Radon measure.

\begin{theorem} \label{thm:monotone-implies-locally-finite-perimeter}
Monotone subsets of $G$ have locally finite perimeter.
\end{theorem}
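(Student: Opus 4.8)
The plan is to prove that a monotone set $E$ has locally finite perimeter by controlling the distributional derivative of $\1_E$ along any fixed $X \in \g_1 \setminus \{0\}$ using the one-dimensional monotone structure along the corresponding lines. The essential idea is a \emph{disintegration} (or foliation) argument: the integral curves of $X$ foliate $G$, and the flow of $X$ gives global coordinates in which $G$ is identified (via a measure-preserving map, up to the constant-speed parameterization) with $\R \times \Sigma$, where $\Sigma$ is a transversal to the flow and the $\R$-factor parameterizes the line through each point. Under this identification the Haar measure $\mu$ disintegrates as $dt \otimes d\nu(y)$ for a suitable measure $\nu$ on the transversal, because left-invariant vector fields generate one-parameter groups of measure-preserving diffeomorphisms (right translations, or flows preserving $\mu$ up to the usual modular factor, which for the relevant $\g_1$-directions is trivial).

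\medskip

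With this disintegration in hand, I would proceed as follows. First, for $\nu$-a.e. $y$ the monotonicity hypothesis says that the slice $t \mapsto \1_E$ along the line through $y$ agrees a.e. with a monotone $\{0,1\}$-valued function, hence equals a.e. the indicator of a (possibly empty, possibly unbounded) interval $I_y \subseteq \R$. Second, the distributional derivative along $X$ of such a one-dimensional indicator is supported on the (at most two) endpoints of $I_y$, and its \emph{total variation} on a bounded interval is at most $2$; more precisely $|D_t \1_{I_y}|(J) \le 2$ for any interval $J$, since a monotone $0$--$1$ function has at most one jump of each sign. Third, I would test the distributional derivative $X\1_E$ against an arbitrary smooth compactly supported $\varphi$, use Fubini in the foliated coordinates to write the pairing as an iterated integral, integrate by parts in the $t$-variable for each fixed slice, and bound the inner integral by $\|\varphi\|_\infty$ times the slice total variation, which is at most $2$ on the $t$-support of $\varphi$. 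Integrating over the transversal against $\nu$ then yields a bound of the form $|\langle X\1_E, \varphi\rangle| \le C\,\|\varphi\|_\infty$ with $C$ controlled by $\nu$ of the (compact) set of transversal coordinates met by $\spt\varphi$; this shows $X\1_E$ is a distribution of order $0$, i.e. a Radon measure, locally finite. Applying this to each $X$ in a basis of $\g_1$ gives locally finite perimeter.

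\medskip

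The main obstacle I anticipate is the \emph{rigorous justification of the disintegration and the attendant measurability statements}, rather than the one-dimensional estimate, which is elementary. Two points need care. First, the phrase ``a.e.\ line'' must be reconciled with the pointwise slicing: I must ensure that the parameterized family of lines through the transversal covers $G$ up to a $\mu$-null set, that the induced slicing measure is genuinely $dt \otimes d\nu$, and that the monotonicity condition (stated for a.e.\ line in the appropriate measure-theoretic sense on the space of lines) transfers to $\nu$-a.e.\ slice $I_y$ being an interval up to a null set. Here one should recall that left translations act on the family of lines and that $\mu$ restricted to a single line is, up to normalization, Lebesgue measure on $\R$ via the arc-length parameterization of the integral curve; the constant-speed property of integral curves of left-invariant fields keeps the Jacobian factor constant along each leaf. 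Second, one must check the measurability in $y$ of the endpoints of $I_y$ (equivalently of the slice total variation) so that the final integration against $\nu$ is legitimate; this follows from Fubini applied to the measurable function $\1_E$, but it is the kind of technical verification that carries the real weight of the argument. Once these measure-theoretic matters are settled, the integration-by-parts estimate is immediate and the conclusion follows.
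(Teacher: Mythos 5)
Your proposal is correct and follows essentially the same route as the paper: the disintegration you describe is exactly the splitting $G \simeq N_X \times \R$ via the measure-preserving flow map $\Phi_X|_{N_X\times\R}$ set up in Section~\ref{subsect:monotone-sets}, and the one-dimensional integration-by-parts estimate on each slice is the heart of both arguments. The only point you gloss over is that monotonicity is known only for a.e.\ direction $X\in\bbS(\g_1)$, so to conclude for \emph{every} $X\in\g_1$ one picks a basis of monotone directions and uses that $\{X\in\g_1:\,X\1_E\text{ is a Radon measure}\}$ is a linear subspace, as the paper does.
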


We stress that monotonicity combined with properties of sets with locally finite perimeter have already been used in the literature. However, monotone sets that were considered were known, for some other reason than monotonicity, or assumed, to have locally finite perimeter. The fact that  monotonicity on its own implies locally finite perimeter was not explicitely known before, at least to our knowledge, and will be obtained as a rather direct consequence of the definitions.

\smallskip

We will next prove that monotone sets are local minimizers for the perimeter.  We refer to Section~\ref{sect:monotone-local-min-perimeter} for precise definitions, see in particular Definition~\ref{def:local-minimizer-perimeter}.

\begin{theorem} \label{thm:monotone-minimizer-for-perimeter}
Given any scalar product $\scal{\cdot,\cdot}$ on $\g_1$ with associated collection $\bfX$ of orthonormal bases of $(\g_1,\scal{\cdot,\cdot})$, monotone subsets of $G$ are local minimizers for the $\bfX$-perimeter.
\end{theorem}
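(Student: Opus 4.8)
The plan is to prove that a monotone set $E$ is a local minimizer for the $\bfX$-perimeter by reducing the statement, via the structure of the $\bfX$-perimeter as a supremum or sum over horizontal directions, to a one-dimensional comparison along lines. By Theorem~\ref{thm:monotone-implies-locally-finite-perimeter} we already know $E$ has locally finite perimeter, so the $\bfX$-perimeter of $E$ in any bounded open set is finite and the problem is well posed. The key idea is that monotonicity is a statement about the restriction of $E$ to a.e. line, and the perimeter should decompose, by a Fubini-type slicing along the integral curves of each $X\in\g_1$, into an integral of the one-dimensional perimeters of these slices. I would first make this slicing precise: for a fixed orthonormal basis and a fixed horizontal direction $X$, integrating the one-dimensional essential boundary measure of $E\cap L$ over the family of lines $L$ directed by $X$ recovers (a component of) the horizontal perimeter of $E$.

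\smallskip

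Once the slicing is established, I would carry out the comparison locally. Fix a bounded open set $\Omega$ and a competitor $F$ agreeing with $E$ outside $\Omega$. The goal is to show $\Per_{\bfX}(E;\Omega)\le \Per_{\bfX}(F;\Omega)$. Using the slicing in each horizontal direction, it suffices to compare, for a.e. line $L$, the one-dimensional perimeter of $E\cap L$ with that of $F\cap L$ inside the slice $\Omega\cap L$. Here monotonicity enters decisively: for a.e. $L$, the set $E\cap L$ coincides up to a null set with a connected subset having connected complement, i.e.\ an interval (a half-line or the whole line or empty). Such a set has the minimal possible one-dimensional essential boundary measure among all modifications that keep the values fixed at the two endpoints of $\Omega\cap L$ — an interval restricted to an interval has at most the boundary points forced by the prescribed boundary data, whereas any competitor that changes the set inside must create at least as many essential boundary points. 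This is the elementary one-dimensional minimality of monotone (i.e.\ interval-like) functions: among $\{0,1\}$-valued functions on an interval with prescribed boundary traces, a monotone one minimizes the number of jumps.

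\smallskip

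Assembling these pieces, the inequality $\Per_{\bfX}(E;\Omega)\le\Per_{\bfX}(F;\Omega)$ follows by integrating the pointwise (in $L$) inequality over the family of lines in each horizontal direction and summing or taking the supremum over directions, as dictated by the precise definition of the $\bfX$-perimeter. The main obstacle, and the place requiring the most care, is the rigorous slicing step: one must justify that the $\bfX$-perimeter of a locally finite perimeter set genuinely disintegrates, via the coarea/Fubini structure adapted to the left-invariant vector fields, into the integral over lines of one-dimensional essential perimeters, and that this disintegration is compatible with the ``for a.e.\ line'' quantifier in the definition of monotonicity. In a general Carnot group this requires being careful about how the Haar measure factors along integral curves of $X\in\g_1$ and about the interplay between the finitely many directions in the basis $\bfX$ and the full sphere of horizontal directions; establishing that the slicing sees exactly the lines relevant to monotonicity, and that null families of lines for one coincide with null families for the other, is the technical heart of the argument.
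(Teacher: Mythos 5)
Your strategy is the same as the paper's: reduce local minimality to a one-dimensional comparison on lines, where monotonicity makes $E\cap L$ an interval and hence a minimizer of the one-dimensional perimeter among competitors with the same boundary behaviour, and then recombine. The paper makes the slicing step rigorous by invoking the kinematic formula of \cite[Proposition~3.13]{MR2165404}, which represents $\Per_\bfX(E,\Omega)$ as $c_\bfX\int_{\bbL_{\scal{\cdot,\cdot}}(G)}\Per_L(E\cap L,\Omega\cap L)\,d\calN_{\scal{\cdot,\cdot}}(L)$, an integral over \emph{all} lines, i.e.\ over the full unit sphere of $\g_1$ and over the transversal subgroups $N_X$; this is exactly compatible with the ``a.e.\ line'' quantifier in the definition of monotonicity, so the technical heart you worry about is a citation rather than new work. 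One caution: your phrase ``summing or taking the supremum over directions'' would not work if taken literally. The $\bfX$-perimeter is the total variation of the $\R^r$-valued measure $(X_1\1_E,\dots,X_r\1_E)$, and the componentwise inequalities $|X_i\1_E|(\Omega)\le|X_i\1_F|(\Omega)$ obtained by slicing along each basis direction separately do not imply the corresponding inequality for the vector total variation (neither their sum nor their maximum computes the perimeter). The correct recombination is the integral over the sphere $\bbS(\g_1)$ with the appropriate density, which is precisely what the kinematic formula supplies. Your one-dimensional step is sound but should also record the small argument the paper gives: since $E\triangle F\subset\subset\Omega$, on the connected component $J_L$ of $\Omega\cap L$ containing the endpoint of the half-line $E\cap L$, the competitor $F$ has positive measure on each side near the ends of $J_L$, forcing $\Per_\R(F_L,J_L)\ge 1$.
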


Theorem~\ref{thm:monotone-minimizer-for-perimeter} will be obtained as a consequence of the kinematic formula \cite[Proposition~3.13]{MR2165404} which relates the $\bfX$-perimeter to 1-dimensional perimeter on lines. It should be compared with  \cite[Proposition~3.9]{https://doi.org/10.48550/arxiv.2105.08890} where local minimality of locally finite perimeter monotone subsets of the Heisenberg group is shown inside open convex sets. Note that $G$ is here an arbitrary Carnot group and local minimality will be proved in any open set. Recall also, as already mentioned, that thanks to Theorem~\ref{thm:monotone-implies-locally-finite-perimeter} we can drop the assumption about local finiteness of the perimeter which appears in~\cite[Proposition~3.9]{https://doi.org/10.48550/arxiv.2105.08890}.

\smallskip

We will then turn to the study of local minimizers for the $\bfX$-perimeter. Such local minimality conditions have already been widely studied, especially in connection with the Bernstein problem, see for instance \cite{MR2333095,MR3406514,MR3984100,MR4069613}. We prove topological and measure-theoretic properties that nevertheless do not seem to be available in the literature and have their own interest. They will be obtained by rather standard arguments. Theorem~\ref{thm:minimizer-for-perimeter} should in particular be compared with \cite[Section~5]{MR2000099} where global minimizers for the perimeter under a volume constraint are considered. 

\smallskip

Given a subset $E$ of $G$ we denote by $\Int(E)$, $\overline{E}$, and $\partial E$ its topological interior, closure, and boundary. Given a Borel measure $\nu$ on $G$ we denote by $\spt \nu$ its support. Given a $\mu$-measurable subset $E$ of $G$ we denote by $\spt_\mu (E)$ its measure-theoretic support, i.e., the support of the restriction of $\mu$ to $E$. We denote by $\Int_\mu(E) = G \setminus \spt_\mu (E^c)$ its measure-theoretic interior and by  $\partial_\mu E = \spt_\mu (E) \cap \spt_\mu (E^c)$ its measure-theoretic boundary. Here $E^c = G \setminus E$ denotes the complement of $E$. We denote by $E_1$ and $E_0$ the set of $\mu$-density 1, respectively $\mu$-density 0, points for $E$ with respect to some, equivalently any, left-invariant homogeneous distance on $G$, see Section~\ref{subsect:distances}, and by $\partial^* E = G\setminus (E_0 \cup E_1)$ the essential boundary of $E$. Given a distance $d$ on $G$, we denote by $B_d(x,r) = \{y\in G:\, d(x,y) < r\}$ the open ball with center $x\in G$ and radius $r>0$ with respect to the distance $d$.

\begin{theorem} \label{thm:minimizer-for-perimeter}
Given any scalar product $\scal{\cdot,\cdot}$ on $\g_1$ with associated collection $\bfX$ of orthonormal bases of $(\g_1,\scal{\cdot,\cdot})$ and given any left-invariant homogeneous distance $d$ on $G$, there is $\epsilon>0$ such that for any local minimizer $E$ for the $\bfX$-perimeter,
\begin{equation}
\partial_\mu E = \left\{x\in G:\, \min\left\{\frac{\mu(E\cap B_d(x,r))}{\mu(B_d(x,r))}, \frac{\mu(E^c \cap B_d(x,r))}{\mu(B_d(x,r))}\right\} >\epsilon \, \,\forall r>0\right\}, \label{e:meas-theoretic-boundary-uniform-lower-densities}
\end{equation}
and consequently,
\begin{align}
& \Int_\mu(E) = E_1 \text{ and }  \spt_\mu (E) = G \setminus E_0, \label{e:interior-and-support} \\
& \spt \Per_\bfX(E,\cdot) = \partial_\mu E = \partial^* E =\partial \Int_\mu(E) = \partial \spt_\mu(E), \label{e:boundaries}\\
& \overline{\Int_\mu(E)} = \spt_\mu(E) \text{ and } \Int(\spt_\mu(E)) = \Int_\mu(E). \label{e:regular-open-closed}
\end{align}
\end{theorem}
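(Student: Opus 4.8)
The plan is to prove the key identity~\eqref{e:meas-theoretic-boundary-uniform-lower-densities} first, since the remaining assertions~\eqref{e:interior-and-support}--\eqref{e:regular-open-closed} should follow from it by essentially formal manipulations together with standard properties of supports and density points. The heart of the matter is a \emph{uniform density estimate}: there exists $\epsilon>0$, depending only on $\scal{\cdot,\cdot}$, $\bfX$, and $d$ (and on $G$), such that for every local minimizer $E$ and every point $x$ lying in both $\spt_\mu(E)$ and $\spt_\mu(E^c)$, and for every $r>0$, one has $\mu(E\cap B_d(x,r))/\mu(B_d(x,r))>\epsilon$ and symmetrically for $E^c$. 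I would first reduce to $x$ the identity element and a convenient radius by left-translation and the homogeneity of the dilations, using that local minimality, the $\bfX$-perimeter, and $\mu$ all behave well under these symmetries (left-invariance of $\mu$ and $\bfX$, and the scaling of $\Per_\bfX$ under dilations).

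The main step, which I expect to be the principal obstacle, is establishing the lower density bound. The standard strategy is a \emph{comparison argument with a monotonicity/iteration scheme}. Suppose the relative measure $\theta(r):=\mu(E\cap B_d(x,r))/\mu(B_d(x,r))$ is small at some radius. Comparing $E$ with the competitor $E\setminus B_d(x,r)$ (which agrees with $E$ outside the ball) and invoking local minimality yields an inequality bounding the perimeter of $E$ inside $B_d(x,r)$ by the perimeter that the ball boundary contributes, i.e.\ by $\Per_\bfX(E\cap B_d(x,r),\partial B_d(x,r))$-type terms; combined with the relative isoperimetric inequality on $G$ (valid in this sub-Riemannian setting, with balls of a homogeneous distance) this produces a differential inequality of the form $\theta(r)^{(Q-1)/Q}\lesssim \theta'(r)$ or a discrete analogue, where $Q$ is the homogeneous dimension. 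Integrating, one sees that either $\theta(r)$ vanishes identically for small $r$ (forcing $x\in E_0$, hence $x\notin\spt_\mu(E)$) or $\theta(r)\geq\epsilon$ for all $r$, which is exactly the dichotomy encoded in~\eqref{e:meas-theoretic-boundary-uniform-lower-densities}. The delicate points are justifying that balls of a \emph{homogeneous} distance admit a relative isoperimetric inequality with uniform constant and that the coarea/slicing used to differentiate $\theta$ is valid for $\bfX$-perimeter; these are where I would need to be careful, but the $\bfX$-perimeter is comparable to a genuine perimeter and the isoperimetric inequality is available in Carnot groups, so the constant $\epsilon$ can be extracted uniformly.

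Granting~\eqref{e:meas-theoretic-boundary-uniform-lower-densities}, I would derive~\eqref{e:interior-and-support} as follows. By definition $\spt_\mu(E)=G\setminus\Int_\mu(E^c)$ and $\Int_\mu(E)=G\setminus\spt_\mu(E^c)$, so it suffices to show $\spt_\mu(E)=G\setminus E_0$, equivalently that $x\notin\spt_\mu(E)$ iff $\mu(E\cap B_d(x,r))=0$ for some $r$, which in turn, by the density dichotomy, is equivalent to $x\in E_0$. The uniform lower bound is what upgrades ``density zero'' to ``measure zero in a ball'': if $x\notin\partial_\mu E$ and $x\notin E_0$, then by~\eqref{e:meas-theoretic-boundary-uniform-lower-densities} the $E^c$-density must drop below $\epsilon$ for some $r$, and iterating the estimate forces $x\in E_1$; this gives the clean trichotomy $G=E_0\sqcup\partial_\mu E\sqcup E_1$ with $\partial^* E=\partial_\mu E$, yielding the middle equalities of~\eqref{e:boundaries}. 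The inclusion $\spt\Per_\bfX(E,\cdot)\subseteq\partial_\mu E$ is standard (the perimeter measure cannot charge the measure-theoretic interior or exterior), and the reverse inclusion follows because a point of positive lower density for both $E$ and $E^c$ at every scale must carry perimeter, again via the relative isoperimetric inequality.

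Finally, the topological identities in~\eqref{e:boundaries} and~\eqref{e:regular-open-closed} follow from the regularity that the uniform densities confer on the representatives $E_1$ and $G\setminus E_0$. Since $\Int_\mu(E)=E_1$ and $\spt_\mu(E)=G\setminus E_0$ are, respectively, measure-theoretically open and closed with $\partial\Int_\mu(E)=\partial\spt_\mu(E)=\partial_\mu E$, and since every point of $\partial_\mu E$ is a limit both of density-$1$ and of density-$0$ points (by the lower bounds applied at all scales), one gets $\overline{\Int_\mu(E)}=\Int_\mu(E)\cup\partial_\mu E=\spt_\mu(E)$ and $\Int(\spt_\mu(E))=\spt_\mu(E)\setminus\partial_\mu E=\Int_\mu(E)$. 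These are routine once the uniform density dichotomy is in place, so I would present them briefly after the main estimate, with the bulk of the work concentrated in proving~\eqref{e:meas-theoretic-boundary-uniform-lower-densities}.
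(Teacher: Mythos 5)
Your proposal follows essentially the same route as the paper: the core is the uniform density dichotomy for local minimizers, obtained by comparing $E$ with $E\setminus B(x,t)$ for $t<r$, combining local minimality with the relative isoperimetric inequality and a coarea-type bound on the spherical contribution to get the differential inequality $m(t)^{(Q-1)/Q}\lesssim m'(t)$ and integrating, after which \eqref{e:interior-and-support}--\eqref{e:regular-open-closed} follow by the same formal manipulations the paper isolates in Lemma~\ref{lem:consequence-of-corollary}. The only cosmetic differences are that the paper first proves the density estimate for the sub-Riemannian distance $d_\bfX$ and transfers it to an arbitrary left-invariant homogeneous distance via biLipschitz equivalence, and that it establishes $\spt \Per_\bfX(E,\cdot)=\partial_\mu E$ for general sets of locally finite perimeter beforehand.
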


We refer to Section~\ref{sect:monotone-local-min-perimeter} for more comments that put the conclusions of Theorem~\ref{thm:minimizer-for-perimeter} into perspective, see also Remark~\ref{rmk:AR-CB}. Note  that, as an immediate consequence of \eqref{e:interior-and-support} and \eqref{e:boundaries}, we get the following corollary.

\begin{corollary} \label{cor:minimizer-for-perimeter} Given any scalar product $\scal{\cdot,\cdot}$ on $\g_1$ with associated collection $\bfX$ of orthonormal bases of $(\g_1,\scal{\cdot,\cdot})$ and given any local minimizer $E$ for the $\bfX$-perimeter, both $\Int_\mu(E)$ and $\spt_\mu(E)$ are measure-theoretic representatives of $E$, and given any left-invariant homogeneous distance $d$ on $G$, the restriction of $\calH^{Q-1}_d$ to $\partial_\mu E$ is a locally finite measure.
\end{corollary}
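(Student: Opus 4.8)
The plan is to read off both assertions directly from Theorem~\ref{thm:minimizer-for-perimeter}, the only external inputs being the Lebesgue density theorem and the relative isoperimetric inequality, both available in the doubling metric measure space $(G,d,\mu)$.

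For the first assertion I would recall that, since $\mu$ is doubling with respect to any left-invariant homogeneous distance, the Lebesgue density theorem gives $\mu(E\triangle E_1)=0$ and $\mu(E^c\triangle E_0)=0$. Hence $\mu\bigl(E\triangle(G\setminus E_0)\bigr)=\mu(E^c\triangle E_0)=0$ as well. By the identities $\Int_\mu(E)=E_1$ and $\spt_\mu(E)=G\setminus E_0$ from \eqref{e:interior-and-support}, both $\Int_\mu(E)$ and $\spt_\mu(E)$ therefore differ from $E$ by a $\mu$-null set, i.e.\ are measure-theoretic representatives of $E$.

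For the second assertion the crucial ingredient is the uniform lower density bound \eqref{e:meas-theoretic-boundary-uniform-lower-densities}. Fixing $x\in\partial_\mu E$ and $r>0$, and using the scaling $\mu(B_d(x,r))=\mu(B_d(0,1))\,r^Q$ (where $Q$ is the homogeneous dimension), \eqref{e:meas-theoretic-boundary-uniform-lower-densities} yields
\[
\min\{\mu(E\cap B_d(x,r)),\mu(E^c\cap B_d(x,r))\}>\epsilon\,\mu(B_d(0,1))\,r^Q .
\]
Inserting this into the relative isoperimetric inequality
\[
\min\{\mu(E\cap B_d(x,r)),\mu(E^c\cap B_d(x,r))\}^{\frac{Q-1}{Q}}\le C\,\Per_\bfX(E,B_d(x,r))
\]
produces a constant $\theta>0$, independent of $x$ and $r$, with $\Per_\bfX(E,B_d(x,r))\ge\theta\,r^{Q-1}$ for every $r>0$ and every $x\in\partial_\mu E$. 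In particular the upper $(Q-1)$-density of the Radon measure $\Per_\bfX(E,\cdot)$ is at least $\theta$ at each point of $\partial_\mu E$, so the standard density comparison lemma for Hausdorff measures gives $\calH^{Q-1}_d\llcorner\partial_\mu E\le c\,\theta^{-1}\,\Per_\bfX(E,\cdot)$ for a dimensional constant $c$. Since $E$ is a local minimizer it has locally finite $\bfX$-perimeter, so $\Per_\bfX(E,\cdot)$ is locally finite and the claimed local finiteness of $\calH^{Q-1}_d\llcorner\partial_\mu E$ follows.

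These computations are routine; the only point requiring care is that full rectifiability of the boundary is not known in an arbitrary Carnot group, so one cannot invoke a De~Giorgi-type structure theorem. The role of Theorem~\ref{thm:minimizer-for-perimeter} is precisely to bypass this: its uniform lower density bound, through the relative isoperimetric inequality, upgrades to a lower Ahlfors regularity of the perimeter measure on $\partial_\mu E$, which is all the comparison lemma needs. The main thing I would verify is therefore that the relative isoperimetric inequality and the Hausdorff density comparison lemma are both stated in the metric setting of $(G,d,\mu)$ for the chosen left-invariant homogeneous distance.
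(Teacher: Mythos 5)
Your proof of the first assertion is exactly the paper's: it is read off from \eqref{e:interior-and-support} together with \eqref{e:lebesgue-representative}, i.e.\ the Lebesgue density theorem for the doubling measure $\mu$. For the second assertion, however, you take a genuinely different route. The paper deduces local finiteness of $\calH^{Q-1}_d\lfloor_{\partial_\mu E}$ from the identity $\partial_\mu E=\partial^* E$ in \eqref{e:boundaries} combined with the general fact, valid for \emph{any} set of locally finite perimeter and quoted from Ambrosio's fine-properties theorems, that $\calH^{Q-1}_d\lfloor_{\partial^* E}$ is locally finite; so the only new input it needs is the coincidence of the measure-theoretic and essential boundaries. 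You instead bypass $\partial^* E$ entirely: starting from the uniform lower density bound \eqref{e:meas-theoretic-boundary-uniform-lower-densities}, the relative isoperimetric inequality gives $\Per_\bfX(E,B_d(x,\lambda r))\gtrsim r^{Q-1}$ at every point of $\partial_\mu E$ and every scale, and the standard upper-density comparison lemma for Hausdorff measures (a Vitali covering argument valid in any metric space) then yields $\calH^{Q-1}_d\lfloor_{\partial_\mu E}\lesssim\Per_\bfX(E,\cdot)$, which is a Radon measure. This is correct; the two technical points you flag (passing from the sub-Riemannian distance to a general homogeneous distance via biLipschitz equivalence, at the cost of a dilation of the ball in the isoperimetric inequality, and the availability of the density comparison lemma in the metric setting) are both routine. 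Your argument is more self-contained and in effect reproves the lower Ahlfors regularity of the perimeter measure that the paper only mentions in Remark~\ref{rmk:AR-CB}, whereas the paper's proof is shorter given the cited literature but requires the full strength of \eqref{e:boundaries}.
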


Here, given $E,F \subset G$, we say that $F$ is a measure-theoretic representative of $E$ if $F$ is equivalent to $E$, i.e., $\mu(E\triangle F) = 0$ where $E\triangle F = (E\setminus F) \cup (F\setminus E)$ denotes the symmetric difference between $E$ and $F$. We denote by $Q=\sum_{i=1}^s i\dim \g_i$ the homogeneous dimension of $G$ and by $\calH^{Q-1}_d$ the $(Q-1)$-dimensional Hausdorff measure with respect to the distance $d$.

\smallskip

We will finally go back to monotone sets and investigate precise monotonicity of their measure-theoretic interior and support. We say that a subset $E$ of $G$ is \textit{precisely monotone} if for every line $L$ both $E\cap L$ and $L\setminus E$ are connected.  As already mentioned, our initial motivation for the present work was the question to know whether monotone sets admit measure-theoretic representatives that are precisely monotone. This is in turn motivated by attempts to classify monotone sets, see \cite{https://doi.org/10.48550/arxiv.2106.13490} and Remark~\ref{rmk:classification} for more comments about classification issues. On the one hand, given a $\mu$-measurable set $E$, the sets $E_1$ and $G\setminus E_0$ are natural, and well known, measure-theoretic representatives of $E$. When $G$ is abelian, i.e. $s=1$, then $E_1$ and $G\setminus E_0$ can be proved to be precisely monotone when $E$ is monotone \cite[Lemma~64]{MR3815462}. However, any reasonnable attempt to extend in a direct way this property in non abelian Carnot groups should involve both left- and right-translations and seems to be hardly reachable because right-translations are not Lipschitz with respect to left-invariant homogeneous distances, and vice-versa. On the other hand, considering the measure-theoretic interior and support turns out to be more natural when dealing with monotonicity, see \cite[Section~5]{MR2729270} and  Theorem~\ref{thm:int-spt-precisely-monotone}. However, neither $\Int_\mu(E)$ nor $\spt_\mu(E)$ are measure-theoretic representatives of $E$ in general. Therefore, using Theorem~\ref{thm:monotone-minimizer-for-perimeter} about local minimality of monotone sets and Corollary~\ref{cor:minimizer-for-perimeter} about local minimizers for the perimeter gives a way to overcome this issue. 

\smallskip

Going back to precise monotonicity, we give in the next theorem a sufficient condition on the ambient Carnot group under which for any monotone subset $E$ both $\Int_\mu(E)$ and $\spt_\mu(E)$ are precisely monotone.

\begin{theorem} \label{thm:int-spt-precisely-monotone}
Assume that 
\begin{itemize}
\item[(H)] for any $X\in \g_1 \setminus \{0\}$ there is an integer $p\geq 2$ such that the map $\Gamma_p:(\g_1)^p \rightarrow G$ defined by $\Gamma_p(X_1,\dots,X_p) = \exp X_1 \cdots \exp X_p$ is open at $(X,\dots,X) \in (\g_1)^p$.
\end{itemize}
Then for any monotone subset $E$ of $G$ both $\Int_\mu(E)$ and $\spt_\mu(E)$ are precisely monotone.
\end{theorem}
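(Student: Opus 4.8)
The plan is to deduce the theorem from Theorems~\ref{thm:monotone-implies-locally-finite-perimeter} and~\ref{thm:monotone-minimizer-for-perimeter}, Theorem~\ref{thm:minimizer-for-perimeter} and Corollary~\ref{cor:minimizer-for-perimeter}, using condition~(H) only at one decisive point. First I would record the following reductions. By Theorem~\ref{thm:monotone-minimizer-for-perimeter} a monotone set $E$ is a local minimizer for the $\bfX$-perimeter, so Theorem~\ref{thm:minimizer-for-perimeter} and Corollary~\ref{cor:minimizer-for-perimeter} apply. In particular $\Int_\mu(E) = E_1$ and $\spt_\mu(E) = G\setminus E_0$, both are measure-theoretic representatives of $E$, and by~\eqref{e:meas-theoretic-boundary-uniform-lower-densities} every point of $\partial_\mu E$ has lower $E$- and $E^c$-density at least $\epsilon$ in every ball $B_d(\cdot,r)$. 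Since monotonicity is a measure-theoretic property, $\Int_\mu(E)$ and $\spt_\mu(E)$ are again monotone. Precise monotonicity of a set $F$ amounts to connectedness of both $F\cap L$ and $L\setminus F$ for every line $L$; applying the desired statement to $E$ and to $E^c$ and using $G\setminus\Int_\mu(E) = \spt_\mu(E^c)$, it suffices to prove, for every monotone set and every line $L$, that the traces $E_1\cap L$ and $(G\setminus E_0)\cap L$ are connected. I will treat $E_1\cap L$ in detail; the case of $G\setminus E_0$ is identical once one locates $\partial_\mu E\cap L$ at the endpoint of the relevant ray, which follows from \eqref{e:boundaries} and \eqref{e:regular-open-closed}.

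The core of the argument is the following \emph{Key Lemma}: for every $X\in\g_1\setminus\{0\}$, almost every line in the direction $X$ has monotone trace for $E$. Granting this, fix a line $L$ in the direction $X$, write $\gamma(t)=g\exp(tX)$ for its points, and suppose, for contradiction, that $E_1\cap L$ is disconnected. Then there are parameters $t_1 < t_2 < t_3$ with $a := \gamma(t_1)$ and $c := \gamma(t_3)$ in $E_1$ and $b := \gamma(t_2)\notin E_1$; by the reductions, $E^c$ has density $0$ at $a$ and at $c$, while it has lower density at least $\epsilon$ at $b$. Now I would use the left-translates $hL=\{h\gamma(t):t\in\R\}$ for $h$ in a small neighborhood $U$ of the identity: these are again lines in the direction $X$, they thread through $h\gamma(t_1),h\gamma(t_2),h\gamma(t_3)$ near $a,b,c$ in the same order, and the map $(h,t)\mapsto h\gamma(t)$ is a submersion since $\partial_h$ alone already surjects onto the tangent space via right translation. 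Hence a Fubini/coarea argument transfers the three density statements to the translates: as the localization radius tends to $0$, the set of $h\in U$ whose translate meets $E$ in positive length near $a$ and near $c$ has measure tending to $\mu(U)$, whereas the set of $h$ whose translate meets $E^c$ in positive length near $b$ keeps measure bounded below by a constant multiple of $\epsilon$. Intersecting these with the full-measure set of $h$ for which $hL$ is monotone (Key Lemma) yields a single monotone line seeing the pattern $E,E^c,E$ in this order, a contradiction. Therefore $E_1\cap L$ is connected.

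The main obstacle is the Key Lemma, and this is exactly where condition~(H) enters. The subtlety is that for $\dim\g_1\geq 2$ the lines of a \emph{fixed} direction $X$ form a null family in the space of all lines, so the defining a.e.-monotonicity of $E$ gives no information along $X$ directly; monotonicity is only available for almost every direction and almost every position. To reach the fixed, possibly non-generic, direction $X$ I would use the integer $p\geq 2$ and the openness at $(X,\dots,X)$ of $\Gamma_p(X_1,\dots,X_p) = \exp X_1\cdots\exp X_p$ provided by~(H). The point is that the broken-line paths with $p$ consecutive segments in perturbed directions $X_1,\dots,X_p$ near $X$ have endpoints $g\,\Gamma_p(X_1,\dots,X_p)$ that, by openness, cover a full neighborhood of $g\exp(pX)$, while each individual segment lies on a line of a \emph{generic} direction $X_i$, for which monotonicity is available for a.e.\ position. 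The plan is to run a change of variables in $(X_1,\dots,X_p)$ — legitimized solely by the openness of $\Gamma_p$ — to transfer the a.e.-monotone pattern of these genuine, generic-direction segments to the straight paths in the direction $X$, thereby forcing a.e.\ line in the direction $X$ to be monotone.

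The delicate points in this last step, which I expect to be the crux of the whole proof, are that openness of $\Gamma_p$ yields only a covering of an open set and not a submersion with a controlled Jacobian, so the transfer has to be phrased purely measure-theoretically, and that the per-segment monotone information must be chained consistently across the $p$ segments so that a forbidden pattern on a positive-measure set of direction-$X$ lines produces a forbidden pattern on a positive-measure set of generic-direction segments, contradicting the definition of monotonicity. Once the Key Lemma is in place, the remaining bookkeeping — upgrading ``monotone almost everywhere on the line'' to ``exactly a ray'' for almost every line, and passing between the open ray for $\Int_\mu(E)=E_1$ and the closed ray for $\spt_\mu(E)=G\setminus E_0$ — is routine given the density structure of Theorem~\ref{thm:minimizer-for-perimeter} and the identities \eqref{e:boundaries}–\eqref{e:regular-open-closed}.
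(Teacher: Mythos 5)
Your reduction is sensible and your second step (the Fubini/density argument deducing connectedness of $E_1\cap L$ from the Key Lemma plus \eqref{e:meas-theoretic-boundary-uniform-lower-densities}) can be made to work, but the proposal has a genuine gap at exactly the point you flag as the crux: the Key Lemma is not proved, and the strategy you sketch for it does not go through. Openness of $\Gamma_p$ at $(X,\dots,X)$ only tells you that the \emph{endpoints} $x'\cdot\Gamma_p(X_1,\dots,X_p)$ of broken paths with generic directions cover a neighborhood of $x'\cdot\exp(pX)$; it gives no relation between the trace of $E$ on the broken path and the trace of $E$ on the straight line $x'\cdot\exp(\R X)$, whose intermediate points the broken path does not visit. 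Monotonicity along each generic segment constrains $E$ along the polygonal curve, not along the line in direction $X$, and no change of variables in $(X_1,\dots,X_p)$ can convert the one into the other. In fact the Key Lemma (a.e.\ line in the fixed, possibly non-generic, direction $X$ is monotone for $E$) is essentially equivalent to the theorem itself once the representative property is known, so your plan is circular in spirit unless this transfer is actually carried out. A second, quantitative flaw: your claim that the case of $(G\setminus E_0)\cap L$ is ``identical'' fails, because there the pattern at $a,b,c$ is (lower density $\geq\epsilon$, density $0$, lower density $\geq\epsilon$), and Fubini then produces two families of nearby lines each of \emph{small} fractional measure $\gtrsim\epsilon$ in the shadow of the balls; two such families need not intersect, so you cannot extract a single line exhibiting the forbidden pattern.

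The paper avoids both problems by never trying to establish monotonicity of lines in the direction $X$. Its Lemma~\ref{lem:main} propagates membership in $\Int_\mu(E)$ directly: if $x\in\spt_\mu(E)$ and $y=x\cdot\exp(tX)\in\Int_\mu(E)$ with $t>p$, then (arguing as in \cite[Lemma~5.9]{MR2729270}) for $\mu$-a.e.\ $x'\in U\cap E$ near $x$ and a.e.\ $(X_1,\dots,X_p)$ near $(X,\dots,X)$ the endpoint $x'\cdot\Gamma_p(X_1,\dots,X_p)$ lies in $E$; condition~(H) together with Sard's theorem and the coarea formula (to compensate for $\Gamma_p$ being merely open, not a submersion) then shows that these endpoints fill a full-measure subset of a neighborhood of $z=x\cdot\exp(pX)$, whence $z\in\Int_\mu(E)$. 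Statement~(ii) of that lemma, obtained by playing (i) for $E$ against (i) for $E^c$, is what yields connectedness of $L\setminus\Int_\mu(E)=\spt_\mu(E^c)\cap L$ — precisely the step your symmetric density argument cannot reach. Note finally that the paper's proof of this theorem uses none of Theorems~\ref{thm:monotone-implies-locally-finite-perimeter}--\ref{thm:minimizer-for-perimeter}; those enter only in Corollary~\ref{cor:equivalent-int-spt-precisely-monotone} to upgrade $\Int_\mu(E)$ and $\spt_\mu(E)$ to measure-theoretic representatives.
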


As explained above, we then get from  Theorem~\ref{thm:monotone-minimizer-for-perimeter}, Corollary~\ref{cor:minimizer-for-perimeter}, and Theorem~\ref{thm:int-spt-precisely-monotone} the following corollary.

\begin{corollary} \label{cor:equivalent-int-spt-precisely-monotone}
Assume that (H) holds. Then for any monotone subset $E$ of $G$ both $\Int_\mu(E)$ and $\spt_\mu(E)$ are precisely monotone measure-theoretic representatives of $E$. 
\end{corollary}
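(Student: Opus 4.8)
The plan is to assemble the corollary directly from the three preceding results, handling its two assertions—precise monotonicity and measure-theoretic representation—separately and then combining them. I would begin by fixing a monotone subset $E$ of $G$ and emphasizing at the outset that the sets $\Int_\mu(E)$ and $\spt_\mu(E)$ are defined purely in terms of the Haar measure $\mu$; they are therefore canonically attached to $E$ and do not depend on any auxiliary choice. This is the point that lets the two halves of the argument, which invoke different structures, speak about the very same pair of sets.

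For precise monotonicity I would invoke Theorem~\ref{thm:int-spt-precisely-monotone} verbatim: since (H) is assumed and $E$ is monotone, that theorem yields precise monotonicity of both $\Int_\mu(E)$ and $\spt_\mu(E)$ with no further work. For the measure-theoretic representation I would fix an arbitrary scalar product $\scal{\cdot,\cdot}$ on $\g_1$ with associated collection $\bfX$ of orthonormal bases. By Theorem~\ref{thm:monotone-minimizer-for-perimeter} the monotone set $E$ is then a local minimizer for the $\bfX$-perimeter, so Corollary~\ref{cor:minimizer-for-perimeter} applies and gives that $\Int_\mu(E)$ and $\spt_\mu(E)$ are measure-theoretic representatives of $E$, i.e. $\mu(E\triangle\Int_\mu(E)) = \mu(E\triangle\spt_\mu(E)) = 0$. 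Since these two sets do not depend on the chosen scalar product, the conclusion is unambiguous, and combining the two paragraphs gives that both are precisely monotone measure-theoretic representatives of $E$.

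I do not expect a genuine obstacle here, as all the substance lies in the cited theorems; the only point needing a moment's care is the interplay of hypotheses. Corollary~\ref{cor:minimizer-for-perimeter} is phrased for local minimizers and presupposes a choice of scalar product, whereas Theorem~\ref{thm:int-spt-precisely-monotone} is phrased for monotone sets under (H) and presupposes none. The bridge is Theorem~\ref{thm:monotone-minimizer-for-perimeter}, which converts monotonicity into local minimality for any such choice; this ensures that both halves of the statement apply to the same canonical pair $(\Int_\mu(E),\spt_\mu(E))$, so no new estimates or constructions are required.
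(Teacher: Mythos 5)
Your proposal is correct and follows exactly the route the paper takes: it obtains precise monotonicity from Theorem~\ref{thm:int-spt-precisely-monotone} and the measure-theoretic representation from Theorem~\ref{thm:monotone-minimizer-for-perimeter} combined with Corollary~\ref{cor:minimizer-for-perimeter}. The remark that $\Int_\mu(E)$ and $\spt_\mu(E)$ are independent of the auxiliary scalar product is a sensible clarification but adds nothing beyond what the paper's one-line derivation already contains.
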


Theorem~\ref{thm:int-spt-precisely-monotone} is inspired by \cite[Proposition~5.8]{MR2729270} where the ambient space is the Heisenberg group and where for $p=2$ the  map $\Gamma_2$ is open (it is actually a submersion) at any $(X,Y)\in \g_1 \times \g_1$ such that $X+Y\not=0$. Note that condition~(H) does not require the map $\Gamma_p$ to be a submersion but only to be open. Although there are examples of Carnot groups where condition~(H) does not hold \cite[Section~4]{MR4208090}, it is known to hold in  step-2 Carnot groups~\cite[Theorem~2.1]{MR4119259}. Therefore, Corollary~\ref{cor:equivalent-int-spt-precisely-monotone} gives in particular a positive answer to the existence of precisely monotone measure-theoretic representatives for monotone sets in the step-2 setting.

\smallskip

The rest of this paper is organized as follows. Section~\ref{subsect:monotone-sets} is devoted to the definition of monotone sets. We will in particular give explicit representations of 1-dimensional measures on lines, as well as of measures on the set of lines, that will be particularly helpful in the rest of the paper. We recall some well known facts about left-invariant homogeneous distances that will be relevant for our purposes in Section~\ref{subsect:distances}. Theorems~\ref{thm:monotone-implies-locally-finite-perimeter} and~\ref{thm:monotone-minimizer-for-perimeter} are proved in the first part of Section~\ref{sect:monotone-local-min-perimeter} while Theorem~\ref{thm:minimizer-for-perimeter} is proved in the second part of the section. We conclude this paper with the proof of Theorem~\ref{thm:int-spt-precisely-monotone} in Section~\ref{sect:int-spt-precisely-monotone}.

\smallskip

\noindent \textit{Acknowledgement.} We would like to thank Daniele Morbidelli for several useful discussions.

\section{Preliminaries} \label{sect:preliminaries}
In the rest of this paper, we consider a Carnot group $G$, i.e., a connected, simply connected Lie group whose Lie algebra $\g$ of left-invariant vector fields admits a stratification. We let $(\g_1,\dots,\g_s)$ denote a stratification of $\g$, i.e., $s$ is a positive integer, $\g_1,\dots,\g_s$ are linear subspaces of $\g$ such that $\g = \g_1 \oplus \cdots \oplus \g_s$ and $[\g_1,\g_i]  =\g_{i+1}$ for $i=1,\dots,s$, where $[\g_1,\g_i]$ denotes the linear subspace of $\g$ generated by $[X,Y]$ with $X\in\g_1$, $Y\in \g_i$, $\g_s \not=\{0\}$ and $\g_i = \{0\}$ for $i>s$. We recall that any two stratifications of a stratifiable Lie algebra are isomorphic (see~\cite[Proposition~2.17]{MR3742567} for a precise statement) and stress that all notions considered in the present paper are invariant under isomorphisms of stratified Lie algebras. We let $\exp: \g \mapsto G$ denote the exponential map and recall that it is a smooth diffeomorphism from $\g$ to $G$. We let $\mu$ denote a Haar measure on $G$. We refer to \cite{MR3587666} and the references therein for a comprehensive introduction to Carnot groups. 

\subsection{Monotone sets}  \label{subsect:monotone-sets}

Given a left-invariant vector field $X\in\g$, we denote by $\Phi_X:G\times \R\rightarrow G$ its associated flow, namely,
\begin{equation*}
\Phi_X(x,t) = x\cdot \exp(tX).
\end{equation*}

In the rest of this paper, we equip $\R$ with its usual 1-dimensional Lebesgue measure denoted by either $\calL^1$ or $dt$. We stress that all Borel measures considered in the present paper are tacitly extended as outer measures, i.e., non-negative countably sub-additive set functions defined on the whole power set of the underlying space, see \cite[Chapter~1~(1.2)]{MR1333890}.

\smallskip

A line in $G$ is the unparameterized oriented image of an integral curve of a left-invariant vector field in $\g_1^* = \g_1 \setminus \{0\}$. Given a line $L$, we write $L=(X,x)\in \g_1^* \times G$ to mean that $L=\Phi_X(x,\R)$ and $X$ is the orientation of $L$. Lines are homeomorphic to $\R$ and we say that a subset  $A$ of a line $L$ is a null subset of $L$ if $\Phi_* \calL^1(A)=0$ for some, equivalently any, homeomorphism $\Phi$ from $\R$ to $L$, where $\Phi_* \calL^1$ denotes the image of the measure $\calL^1$ on $\R$ under the homeomorphism $\Phi$. We say that two subsets of a line $L$ are equivalent if their symmetric difference within $L$ is a null subset of $L$. 

\smallskip

Given $E\subset G$, we denote by $\1_E$ its characteristic function.

\begin{definition} \label{def:monotone-line}
We say that a line $L$ is monotone for a subset $E$ of $G$ if for some, equivalently any, homeomorphism $\Phi$ from $\R$ to $L$ the function $t\in \R \mapsto \1_E(\Phi(t))$ agrees $\calL^1$-a.e. with a monotone function, equivalently, if there is a connected subset of $L$ that is equivalent to $E\cap L$ and has connected complement in $L$.
\end{definition}

The collection of all lines in $G$ has a natural smooth structure that induces a natural notion of null sets of lines. Namely, let $\scal{\cdot, \cdot}$ be a scalar product on $\g_1$. Let $\norm\cdot$ denote its associated norm and set $\bbS(\g_1) = \{X\in\g_1:\, \norm X =1\}$. Given $ X \in \bbS(\g_1)$, we set $L_{X}=\exp \R X$. We let $X^\perp$ denote the direct sum of the orthogonal complement of $\R X$ in $(\g_1,\scal{\cdot, \cdot})$ with $\oplus_{i= 2}^s \g_i$ and set $N_{X} = \exp X^\perp$. It is well known that for any $X \in \bbS(\g_1)$ both $L_{X}$ and $N_{X}$ are subgroups of $G$ that are complementary, i.e., $N_{X} \cap L_{X} = \emptyset$ and $N_{X} \cdot L_{X} = G$. There is a one to one correspondence between lines in $G$ and elements in 
\begin{equation*} 
\bbL_{\scal{\cdot, \cdot}}(G) = \bigsqcup_{X \in \bbS(\g_1)} \{X \} \times N_{X}.
\end{equation*}
Indeed, since $N_{X}$ and $L_{X}$ are complementary subgroups, it can easily be seen that $L$ is a line in $G$ if and only if there is $(X ,x) \in \bbL_{\scal{\cdot, \cdot}}(G)$ such that $L=(X,x)$ and such an $(X ,x) \in \bbL_{\scal{\cdot, \cdot}}(G)$ is unique. We can therefore identify the collection of all lines in $G$ with $\bbL_{\scal{\cdot, \cdot}}(G)$ equipped with its natural structure of smooth orientable manifold. Then we say that a set of lines $A$ is a null set if $\calN(A)=0$ for some, equivalently any, smooth measure with positive density $\calN$ on $\bbL_{\scal{\cdot, \cdot}}(G)$. We stress that any choice of scalar product on $\g_1$ leads to the same null sets of lines.

\begin{definition} \cite[Definition~3.7]{MR2729270} \label{def:monotone-sets} We say that $E$ is monotone if $E$ is a $\mu$-measurable subset of $G$ and almost every line in $G$ is monotone for $E$.
\end{definition}

Note that $E$ is monotone if and only if its complement $E^c = G \setminus E$ is monotone.

\begin{remark} We indeed have that a $\mu$-measurable subset of $G$ is monotone in the sense of Definition~\ref{def:monotone-sets} if and only if it is monotone in the sense of \cite[Definition~3.7]{MR2729270}. Working with oriented lines, rather than unoriented ones as is \cite{MR2729270}, is  nothing more than  a technical convenience that was also adopted in subsequent works involving monotone subsets of the Heisenberg group (see for instance \cite{MR2892612}).
\end{remark}

\begin{remark} Monotonicity is preserved under left-translations, meaning that if $E$ is monotone then $y\cdot E$ is monotone for any $y\in G$. Indeed, since left-translations are homeomorphisms that send lines to lines, it is clear from Definition~\ref{def:monotone-line} that if a line $L$ is monotone for a set $E$ then the line $y\cdot L$ is monotone for $y\cdot E$ for any $y\in G$. Next, if $A$ is a null set of lines then $y\cdot A$ is a null set of lines for any $y\in G$. Indeed, let $y\in G$ and a scalar product $\scal{\cdot,\cdot}$ on $\g_1$ be given. For any $L=(X,x)\in \bbL_{\scal{\cdot, \cdot}}(G)$, $t\in \R$, we have
\begin{equation*}
y\cdot \Phi_X(x,t) =  y_{N_X} \cdot \exp(t_y X) \cdot x \cdot \exp(-t_y X) \cdot \exp((t_y + t)X).
\end{equation*}
Here $(y_{N_X},t_y)$ is the unique element in $N_X \times \R$ such that $y= y_{N_X} \cdot \exp(t_y X)$. Since $N_X$ is a normal subgroup of $G$, it follows that, as an element in $\bbL_{\scal{\cdot, \cdot}}(G)$, the left-translate $y\cdot L$ of a line $L=(X,x) \in \bbL_{\scal{\cdot, \cdot}}(G)$ is given by $y\cdot L = (X,\Psi_{X,y}(x))\in \bbL_{\scal{\cdot, \cdot}}(G)$ where $\Psi_{X,y}(x) =y \cdot  x \cdot \exp(-t_y X)$. Since the map $(X,x) \in \bbL_{\scal{\cdot, \cdot}}(G) \mapsto (X,\Psi_{X,y}(x)) \in  \bbL_{\scal{\cdot, \cdot}}(G)$ is a smooth diffeomorphism, it sends null sets of lines to null sets of lines.
\end{remark}

\begin{remark} \label{rmk:monotonicity-dilations} For similar reasons, monotonicity is preserved under dilations, meaning that if $E$ is monotone then $\delta_\lambda(E)$ is monotone for any $\lambda>0$. Here $\delta_\lambda:G \rightarrow G$ denotes the dilation with factor $\lambda$ given by $\delta_\lambda (\exp \sum_{i=1}^s X_i) = \exp \sum_{i=1}^s \lambda^i X_i$ for any $X_i\in \g_i$, $i=1,\dots,s$. Indeed, since dilations are group homomorphisms, we have $\delta_\lambda(\Phi_X(x,\R)) = \Phi_X(\delta_\lambda(x),\R)$ for any $(X,x) \in \g_1^* \times G$, $\lambda>0$. Therefore dilations are homeomorphisms that send lines to lines and it follows from Definition~\ref{def:monotone-line} that if a line $L$ is monotone for a set $E$ then the line $\delta_\lambda(L)$ is monotone for $\delta_\lambda(E)$ for any $\lambda>0$. Next, if $A$ is a null set of lines then $\delta_\lambda(A)$ is a null set of lines for any $\lambda >0$.  Indeed, fix a scalar product $\scal{\cdot, \cdot}$ on $\g_1$. Since $N_X$ is a homogeneous subgroup of $G$ for any $X\in \bbS(\g_1)$, meaning that $\delta_\lambda(N_X) \subset N_X$ for any $\lambda>0$, it follows that, as an element in $\bbL_{\scal{\cdot, \cdot}}(G)$, the dilation $\delta_\lambda(L)$ of a line $L=(X,x) \in \bbL_{\scal{\cdot, \cdot}}(G)$ is given by $\delta_\lambda(L) = (X,\delta_\lambda(x))\in \bbL_{\scal{\cdot, \cdot}}(G)$ for any $\lambda>0$. Since for $\lambda>0$ the map $(X,x) \in \bbL_{\scal{\cdot, \cdot}}(G) \mapsto (X,\delta_\lambda(x)) \in  \bbL_{\scal{\cdot, \cdot}}(G)$ is a smooth diffeomorphism, it sends null sets of lines to null sets of lines.
\end{remark}

For any given choice of scalar product $\scal{\cdot, \cdot}$ on $\g_1$, it will be convenient for our purposes to work with particular choices of the notions defined above. Namely, let $\scal{\cdot, \cdot}$ be scalar product on $\g_1$. First, given $X\in \bbS(\g_1)$, we equip $N_{X}$ with a Haar measure $\mu_{X}$ normalized in such a way that the restriction of $\Phi_X$ to $N_{X} \times \R$ is a measure preserving smooth diffeomorphism from $(N_{X} \times \R, \mu_{X} \otimes dt)$ to $(G,\mu)$. This is indeed always possible as we briefly explain now. Complete $X$ into a basis $(X,X_2,\dots,X_n)$ of $\g$ adapted to the stratification and so that $(X,X_2,\dots,X_r)$ is an orthonormal basis of $(\g_1,\scal{\cdot,\cdot})$. Here $n=\dim \g$ and $r=\dim\g_1$. Using exponential coordinates of the first kind associated to such a basis, one can identify $G$ with $\R^n$. As a well known fact, it follows from the Baker-Campbell-Hausdorff (BCH) formula that the $n$-dimensional Lebesgue measure is a Haar measure on $G\simeq \R^n$. Similarly, we identify $N_{X}$ with $\R^{n-1}$ using exponential coordinates of the first kind associated to the basis $(X_2,\dots,X_n)$ of $N_X$ and it also follows from the BCH formula that the $(n-1)$-dimensional Lebesgue measure is a Haar measure on $N_{X} \simeq \R^{n-1}$. Using once again the BCH formula, it is a routine computation to verify that the smooth map 
\begin{equation*}
\begin{split}
\Phi_{X}|_{N_{X} \times \R}: N_{X} \times \R \simeq \R^{n-1}\times \R  &\rightarrow  G \simeq \R^n\\
(\exp\sum_{j=2}^n x_j X_j,t) &\mapsto (\exp\sum_{j=2}^n x_j X_j)\cdot \exp(tX)
\end{split}
\end{equation*}
has Jacobian determinant identically equal to 1. 

Next, we denote by $\sigma_{\scal{\cdot,\cdot}}$ the standard measure on $\bbS(\g_1)$ and define the measure $\calN_{\scal{\cdot,\cdot}}$ on $\bbL_{\scal{\cdot, \cdot}}(G)$ setting
\begin{equation*}
\calN_{\scal{\cdot,\cdot}}(A) = \int_{\bbS(\g_1)} \mu_{X}(\{x\in N_{X}:\, (X,x) \in A\}) \, d\sigma_{\scal{\cdot,\cdot}}(X)
\end{equation*}
for any Borel subset $A$ of $\bbL_{\scal{\cdot, \cdot}}(G)$. It can easily be seen from the previous construction that $\calN_{\scal{\cdot,\cdot}}$ is a smooth measure with positive density on $\bbL_{\scal{\cdot, \cdot}}(G)$. 

Finally, given $L=(X,x) \in \bbL_{\scal{\cdot, \cdot}}(G)$, the map $\Phi_X(x,\cdot)$ is a homeomorphism from $\R$ to $L$, and we set $\calL^1_{L,\scal{\cdot, \cdot}} = \Phi_X(x,\cdot)_*\calL^1$. A non empty connected strict subset of $L$ that has connected complement in $L$ is called a half-line in $L$. Equivalently, a half-line in $L$ is a set of the form $\Phi_X(x,I)$ where $I$ is a half-line in $\R$, i.e., $I$ is an unbounded strict subinterval of $\R$.

\begin{definition} \label{def:monotone-direction} Given a scalar product $\scal{\cdot, \cdot}$ on $\g_1$, we say that $X\in \bbS(\g_1)$ is a monotone direction for a subset $E$ of $G$ if for $\mu_X$-a.e. $x\in N_X$ the line $L=(X,x)\in \bbL_{\scal{\cdot, \cdot}}(G)$ is monotone for $E$, where this latter property can be rephrased saying that the following equivalent conditions hold:
\begin{itemize}

\item[(i)] $\1_E(\Phi_X(y,t)) = \1_{I} (t)$ for $\calL^1$-a.e.~$t\in\R$ where either $I \in \{\emptyset,\R\}$ or $I$ is a half-line in $\R$,

\item[(ii)] either $\calL^1_{L,\scal{\cdot, \cdot}}(E \cap L) = 0$, or $\calL^1_{L,\scal{\cdot, \cdot}}(E^c \cap L) = 0$, or there is a half-line $I$ in $L$ such that $\calL^1_{L,\scal{\cdot, \cdot}}(( E\triangle I) \cap L) = 0$.
\end{itemize}
\end{definition}

As an immediate consequence of the previous constructions, we get the following characterization of monotone sets.

\begin{lemma} A $\mu$-measurable subset $E$ of $G$ is monotone if and only if $\sigma_{\scal{\cdot, \cdot}}$-a.e. $X\in \bbS(\g_1)$ is a monotone direction for $E$ for some, equivalently any, scalar product $\scal{\cdot, \cdot}$ on $\g_1$.
\end{lemma}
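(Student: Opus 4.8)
The plan is to unwind the definitions and then exploit the explicit product form of the measure $\calN_{\scal{\cdot,\cdot}}$ to reduce the intrinsic monotonicity of $E$ to a direction-by-direction statement through a Tonelli argument. Fix an arbitrary scalar product $\scal{\cdot,\cdot}$ on $\g_1$ together with its associated data $\sigma_{\scal{\cdot,\cdot}}$, $(\mu_X)_{X\in\bbS(\g_1)}$ and $\calN_{\scal{\cdot,\cdot}}$, and set
\[
\mathcal{M}^c=\{(X,x)\in\bbL_{\scal{\cdot,\cdot}}(G):\ (X,x)\ \text{is not monotone for}\ E\},
\]
whose fiber over $X\in\bbS(\g_1)$ is $D_X=\{x\in N_X:\ (X,x)\ \text{is not monotone for}\ E\}$. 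Since $\calN_{\scal{\cdot,\cdot}}$ is a smooth measure with positive density, the definition of null sets of lines shows that $E$ is monotone if and only if $E$ is $\mu$-measurable and $\calN_{\scal{\cdot,\cdot}}(\mathcal{M}^c)=0$, while by Definition~\ref{def:monotone-direction} the point $X$ is a monotone direction for $E$ if and only if $\mu_X(D_X)=0$. The whole statement thus amounts to the identity
\[
\calN_{\scal{\cdot,\cdot}}(\mathcal{M}^c)=\int_{\bbS(\g_1)}\mu_X(D_X)\,d\sigma_{\scal{\cdot,\cdot}}(X),
\]
because once this is known the right-hand side vanishes exactly when the non-negative integrand $X\mapsto\mu_X(D_X)$ vanishes for $\sigma_{\scal{\cdot,\cdot}}$-a.e.\ $X$, i.e.\ when $\sigma_{\scal{\cdot,\cdot}}$-a.e.\ $X$ is a monotone direction for $E$.

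This identity is nothing but the defining formula of $\calN_{\scal{\cdot,\cdot}}$ applied to $A=\mathcal{M}^c$, so the only genuine point to verify is that $\mathcal{M}^c$ is $\calN_{\scal{\cdot,\cdot}}$-measurable with measurable fibers; this is where I expect the main (if modest) difficulty to lie, since the defining formula is stated for Borel sets while $\mathcal{M}^c$ is built from an a.e.\ condition on the merely $\mu$-measurable set $E$. To handle it I would first give a measurable description of the monotone lines. Fixing a finite Borel measure $\gamma$ on $\R$ equivalent to $\calL^1$, set
\[
P_{101}(X,x)=\int_{t_1<t_2<t_3}\1_E(\Phi_X(x,t_1))\bigl(1-\1_E(\Phi_X(x,t_2))\bigr)\1_E(\Phi_X(x,t_3))\,d\gamma(t_1)\,d\gamma(t_2)\,d\gamma(t_3),
\]
and let $P_{010}$ be the analogous integral with $E$ and $E^c$ interchanged. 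A short one-dimensional lemma shows that a measurable $\{0,1\}$-valued function on $\R$ agrees $\calL^1$-a.e.\ with the indicator of $\emptyset$, of $\R$, or of a half-line if and only if it exhibits neither of the ordered patterns $1,0,1$ nor $0,1,0$ on a set of positive measure; consequently, by Definition~\ref{def:monotone-line}, $(X,x)$ is monotone for $E$ if and only if $P_{101}(X,x)=P_{010}(X,x)=0$, so that $\mathcal{M}^c=\{P_{101}>0\}\cup\{P_{010}>0\}$.

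It remains to check that $P_{101}$ and $P_{010}$ are $\calN_{\scal{\cdot,\cdot}}$-measurable. The map $\Pi\colon(X,x,t)\mapsto\Phi_X(x,t)=x\cdot\exp(tX)$ is smooth on the total space $\bigsqcup_{X}\{X\}\times N_X\times\R$, and by the normalization of the $\mu_X$ it pushes the fibered measure $\int_{\bbS(\g_1)}\delta_X\otimes\mu_X\otimes\calL^1\,d\sigma_{\scal{\cdot,\cdot}}(X)$ forward to the constant multiple $\sigma_{\scal{\cdot,\cdot}}(\bbS(\g_1))\,\mu$ of the Haar measure. Choosing a Borel set $B$ with $\mu(E\triangle B)=0$, it follows that $\1_E\circ\Pi=\1_B\circ\Pi$ off a null set for the fibered measure, whence $\1_E\circ\Pi$ is measurable; Tonelli's theorem (applicable since $\sigma_{\scal{\cdot,\cdot}}$ and each $\mu_X$ are $\sigma$-finite) then yields measurability of $P_{101}$, $P_{010}$ and of $X\mapsto\mu_X(D_X)$, and validates the displayed integral identity for $\mathcal{M}^c$. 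Finally, monotonicity of $E$ is intrinsic and makes no reference to a scalar product; the equivalence just proved for the arbitrary fixed $\scal{\cdot,\cdot}$ therefore shows that ``$\sigma_{\scal{\cdot,\cdot}}$-a.e.\ $X$ is a monotone direction for $E$'' holds for one scalar product if and only if it holds for every scalar product, namely precisely when $E$ is monotone, which is the asserted ``some, equivalently any'' clause and completes the argument.
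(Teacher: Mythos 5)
Your proposal is correct and follows the route the paper intends: the lemma is stated there as an immediate consequence of the construction of $\calN_{\scal{\cdot,\cdot}}$ as the fibered integral of the $\mu_X$ over $\sigma_{\scal{\cdot,\cdot}}$, which is exactly the identity you exploit. The only addition is your careful verification of measurability of the set of non-monotone lines via the functions $P_{101}$, $P_{010}$, a point the paper leaves implicit (its tacit extension of Borel measures to outer measures is meant to absorb it), and your argument there is sound.
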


\begin{remark} Constant-normal sets are monotone. Constant-normal sets play an important role in the study of sets with locally finite perimeter, see for instance \cite{MR1871966,MR1984849,MR3353698}. They are fully understood in step-2 and type-$\star$ Carnot groups \cite{MR1984849,MR3330910} and partially in a few other specific examples of Carnot groups, see for instance \cite{MR3078944}. They have been studied in the general setting of an arbitrary Carnot group in \cite{MR4228617} and one should in particular compare \cite[Section~3]{MR4228617} with Theorem~\ref{thm:minimizer-for-perimeter}. We would like to stress here that the study of monotone sets goes beyond that of constant-normal ones and is in many respects more delicate. Recall that $E$ has constant-normal if  $E$ is a $\mu$-measurable subset of $G$ and there is a closed half-space $W$ of $\g_1$ such that  for any $X\in W$ we have $X\1_E \geq 0$ in the sense of distribution. Equivalently, a $\mu$-measurable subset $E$ of $G$ has constant-normal if and only if for some, equivalently any, scalar product $\scal{\cdot, \cdot}$ on $\g_1$, there is $X_1 \in \bbS(\g_1)$ such that the following holds for any $X\in \bbS(\g_1)$ such that $\scal{X_1,X} \geq 0$: for $\mu_X$-a.e.~$x\in N_X$ the function $t\in \R \mapsto \1_E(\Phi_X(x,t))$ agrees $\calL^1$-a.e. with a nondecreasing function,  equivalently, there is $t_x\in [-\infty,+\infty]$ such that $E\cap \Phi_X(x,\R)$ is equivalent to $\Phi_X(x,(t_x,+\infty))$, see \cite[Proposition~2.8]{MR4228617}.\footnote{We warm the reader not to be confused by the terminology "monotone direction" used in several papers where constant-normal sets are considered and where it does not have the same meaning that the one introduced by Cheeger and Kleiner \cite{MR2729270} when dealing with the larger class of monotone sets, which is the one we follow in the present paper. More explicitly, recall from Definitions~\ref{def:monotone-line} and~\ref{def:monotone-direction}, see also \cite[Definition~5.3]{MR2729270}, that in the present paper a direction $X\in \bbS(\g_1)$ is said to be monotone for a $\mu$-measurable set $E$ if for $\mu_X$-a.e.~$x\in N_X$ the function $t\in \R \mapsto \1_E(\Phi_X(x,t))$ agrees $\calL^1$-a.e. with a \textit{monotone} function, while in e.g. \cite{MR3353698,MR4228617} a direction $X\in \bbS(\g_1)$ is said to be monotone for a $\mu$-measurable set $E$ if $X\1_E\geq 0$, meaning that for $\mu_X$-a.e.~$x\in N_X$ the function $t\in \R \mapsto \1_E(\Phi_X(x,t))$ agrees $\calL^1$-a.e. with a \textit{nondecreasing} function.} If $G$ is abelian, i.e., $s=1$ and the Carnot group structure of $G$ is therefore nothing but that of a finite dimensional real vector space, monotone sets are equivalent to half-spaces \cite[Lemma~64]{MR3815462} and have constant-normal. On the contrary, when $G$ is non abelian, i.e., when $s\geq 2$, having constant-normal is a much more restrictive condition that monotonicity. As an easy example, it is not hard to verify that in an arbitrary Carnot group of step $s\geq 2$, for any $Z\in \g_2$, $Z\not=0$, the half-space $E_Z = \{ \exp Y \in G:\, Y\in \g,\, \scal{Y,Z} > 0\}$ is monotone but does not have constant-normal. In this example $\scal{\cdot, \cdot}$ denotes a scalar product defined on the whole $\g$ and  adapted to the stratification. Moreover, if $G$ is of M\'etivier's type, i.e., if the map $Y\in\g_1 \mapsto [X,Y] \in [\g,\g]$ is surjective for any $X\in\g_1^*$, then there is no $X\in \g_1^*$ such that $X\1_{E_Z}$ has constant sign and there is no open cone $C$ with respect to which $E_Z$ satisfies the $C$-cone condition considered in \cite{MR4228617}.  We also recall that in an arbitrary Carnot group sublevel sets of horizontally affine functions are precisely monotone sets that may not be equivalent to half-spaces, and this does already happen  in some step-2 cases~\cite{https://doi.org/10.48550/arxiv.2004.08129,https://doi.org/10.48550/arxiv.2106.13490}. On the contrary, constant-normal subsets of step-2 Carnot groups are known \cite{MR1984849} to be equivalent to vertical half-spaces, i.e., sets of the form $ \{ \exp Y \in G:\, Y\in \g,\, \scal{Y,X} > 0\}$ for some $X\in \g_1^*$.
\end{remark}


\subsection{Left-invariant homogeneous distances} \label{subsect:distances}

A distance $d$ on $G$ is said to be homogeneous if $d(\delta_\lambda(x), \delta_\lambda(y)) = \lambda d(x,y)$ for any $x,y\in\g$, $\lambda>0$. Here $\delta_\lambda:\g \rightarrow \g$ denotes the dilation with factor $\lambda$ (see Remark~\ref{rmk:monotonicity-dilations}). It is well known that the topology induced by any left-invariant homogeneous distance coincide with the topology induced by the Lie group structure of $G$, see for instance~\cite[Proposition~2.26]{MR3943324}. We recall below some well known facts that will be relevant for our purposes.

\smallskip

For any left-invariant homogeneous distance $d$ on $G$, Haar measures are doubling with respect to $d$. They are more precisely $Q$-uniform, i.e., 
\begin{equation} \label{e:uniform-Haar-meas}
\mu(B_d(x,r)) = c_d r^Q 
\end{equation}
for any $x\in G$, $r>0$, where $c_d  = \mu(B_d(0,1))>0$ and $Q=\sum_{i=1}^s i\dim \g_i$ denotes the homogeneous dimension of $G$.

\smallskip

Given a left-invariant homogeneous distance $d$ on $G$, a subset $E$ of $G$, and $t>0$, we set 
\begin{equation*}
E_{d,t} = \left\{x\in G:\, \lim_{r\downarrow 0} \frac{\mu(E\cap B_d(x,r))}{\mu(B_d(x,r))} =t \right\}.
\end{equation*}
We recall that if $E$ is $\mu$-measurable then any two left-invariant homogeneous distances $d$ and $\rho$ yield the same sets $E_{d,0} = E_{\rho,0}$ and $E_{d,1} = (E^c)_{d,0} = (E^c)_{\rho,0}=E_{\rho,1}$ that we denote by $E_0$ and $E_1$ respectively. This indeed follows from the doubling property of Haar measures with respect to left-invariant homogeneous distances together with the fact that any two such distances are biLipschitz equivalent. We also denote by $\partial^* E = G \setminus (E_0 \cup E_1)$ the essential boundary of a set $E$.

\smallskip

As another consequence of the doubling property of Haar measures with respect to left-invariant homogeneous distances, we recall that if $E$ is $\mu$-measurable then both $E_1$ and $G\setminus E_0$ are measure-theoretic representatives of $E$, i.e.,
\begin{equation} \label{e:lebesgue-representative}
\mu(E\triangle E_1) = 0 \quad \text{and} \quad \mu(E^c \triangle E_0) = 0.
\end{equation}

\smallskip

\noindent\textit{Example.} Given a scalar product $\scal{\cdot,\cdot}$ on $\g_1$, we denote by $\bfX$ the collection of all orthonormal bases of $(\g_1,\scal{\cdot,\cdot})$ and by $d_\bfX$ the sub-Riemannian distance induced by some, equivalently any, element in $\bfX$. It is well known that $d_\bfX$ is left-invariant and homogeneous, see for instance~\cite{MR3587666}.

\begin{remark} Given any scalar product $\scal{\cdot,\cdot}$ on $\g_1$, a subset $A$ of a line $L=(X,x)\in \bbL_{\scal{\cdot,\cdot}}(G)$ is a null subset of $L$ if and only if for some, equivalently any, left-invariant homogeneous distance $d$ on $G$ we have $\calH^1_d(A)= 0$ where $\calH^1_d$ denotes the 1-dimensional Hausdorff measure induced by $d$. Indeed, let $d$ be a left-invariant homogeneous distance on $G$. Then $d(\Phi_X(x,t), \Phi_X(x,s)) = c_{d,X} |t-s|$ for all $s,t \in \R$ where $c_{d,X} = d(0,\exp X)$ and therefore $\calL^1_{L,\scal{\cdot, \cdot}} = c_{d,X} \calH^1_d\lfloor_L$.
\end{remark}

\section{Monotone sets and local minimizers for the perimeter}  \label{sect:monotone-local-min-perimeter}

Given $X\in \g$ and a $\mu$-measurable subset $E$ of  $G$, we denote by $X\1_E$ the distribution given by 
\begin{equation*}
\scal{X\1_E,\varphi} = - \int_E X\varphi \, d\mu, \quad \varphi \in C^\infty_c(G).
\end{equation*}

We recall that a subset $E$ of $G$ is said to have locally finite perimeter if $E$ is a $\mu$-measurable subset of $G$ and  $X\1_E$ is a Radon measure for any $X\in \g_1$.

\smallskip

We first prove Theorem~\ref{thm:monotone-implies-locally-finite-perimeter} which asserts that monotone sets have locally finite perimeter. In the rest of this paper, given $E, F \subset G$, we write $E\subset \subset F$ to mean that $E$ is a relatively compact subset of $F$.

\begin{proof}[Proof of Theorem~\ref{thm:monotone-implies-locally-finite-perimeter}]
Let $E$ be monotone. Fix a scalar product $\scal{\cdot,\cdot}$ on $\g_1$. Let $X \in \bbS(\g_1)$ be a monotone direction for $E$. For $\mu_{X}$-a.e.~$x\in N_{X}$ the line $(X,x)\in \bbL_{\scal{\cdot,\cdot}}(G)$ is monotone for $E$, i.e., $\1_E(\Phi_X(x,t)) = \1_{I_x}(t)$ for a.e.~$t\in \R$ where either $I_x\in \{\emptyset,\R\}$ or $I_x$ is a half-line in $\R$. Let $\Omega\subset \subset G$ be open, $K \subset N_{X}$ be compact, and $a,b \in \R$, $a<b$, be such that $\Phi_X^{-1}(\Omega) \cap (N_{X} \times \R) \subset K \times [a,b]$. Let $\varphi \in C^1_c(\Omega)$. Since $\Phi_X$ is a measure preserving diffeomorphism from $(N_{X} \times \R, \mu_{X} \otimes dt)$ to $(G,\mu)$, we have 
\begin{equation*}
\begin{split}
\int_E   X\varphi \, d\mu &= \int_K \int_a^b \1_E(\Phi_X(x,t)) X\varphi(\Phi_X(x,t)) \, dt \, d\mu_{X}(x) \\
&= \int_K \int_{ (a,b) \cap I_x} X\varphi(\Phi_X(x,t)) \, dt \, d\mu_X(x).
\end{split}
\end{equation*}
Since either $I_x\in \{\emptyset,\R\}$ or $I_x$ is a half-line, $(a,b) \cap I_x$ is either empty or a subinterval of $(a,b)$ such that $\inf (a,b) \cap I_x = a$ or $\sup (a,b) \cap I_x = b$. Since 
\begin{equation*}
\int_c^d X\varphi(\Phi_X(x,t)) \, dt = \varphi(\Phi_X(x,d)) - \varphi(\Phi_X(x,c))
\end{equation*}
for any $c,d \in \R$, we get that 
\begin{equation*}
| \int_{(a,b) \cap I_x} X\varphi(\Phi_X(x,t)) \, dt | \leq \sup|\varphi|.
\end{equation*}
Therefore
\begin{equation*}
|\int_E  X\varphi \, d\mu | \leq \mu_{X} (K)  \sup|\varphi|,
\end{equation*}
i.e., $X\1_E$ is a Radon measure. Since $\sigma_{\scal{\cdot,\cdot}}$-a.e.~direction in $\bbS(\g_1)$ is monotone for $E$ and $\{X\in \g_1: \, X\1_E \text{ is a Radon measure}\}$ is a linear subspace of $\g_1$, we get that $X\1_E$ is a Radon measure for any $X\in \g_1$, i.e., $E$ has locally finite perimeter.
\end{proof}

Given a scalar product $\scal{\cdot,\cdot}$ on $\g_1$ with associated collection $\bfX$ of orthonormal bases of $(\g_1,\scal{\cdot,\cdot})$, the $\bfX$-perimeter of a $\mu$-measurable set $E$ in an open set $\Omega$ is defined as
\begin{equation*} 
\Per_{\bfX}(E,\Omega) = \sup \left\{ \int_E \sum_{i=1}^r X_i\varphi \, d\mu: \varphi \in C^1_c(\Omega,\R^r), \norm{\varphi}_{\infty} \leq 1 \right\}
\end{equation*}
for some, equivalently any, $(X_1,\dots,X_r) \in \bfX$. 

\smallskip

The value of the $\bfX$-perimeter depends on the choice of the scalar product $\scal{\cdot,\cdot}$ on $\g_1$. However, for any two choices of scalar product on $\g_1$ with associated collection $\bfX$ and $\mathbf{X'}$ of orthonormal bases, there are $c,C>0$ such that $c \Per_{\bfX}(E,\Omega) \leq \Per_{\mathbf{X'}}(E,\Omega) \leq C \Per_{\bfX}(E,\Omega)$ for any $\mu$-measurable set $E$ and any open set $\Omega$, where these inequalities hold in $[0,+\infty]$.

\smallskip

It is well known that a $\mu$-measurable set $E$ has locally finite perimeter if and only if for any $\Omega \subset \subset G$ open we have $\Per_{\bfX}(E,\Omega)<+\infty$ for some, equivalently any, scalar product $\scal{\cdot,\cdot}$ on $\g_1$. Furthermore, when $E$ has locally finite perimeter, the set function $\Omega \mapsto \Per_{\bfX}(E,\Omega)$ is the restriction to the relatively compact open subsets of $G$ of a Radon measure that we denote by $\Per_{\bfX}(E,\cdot)$. We refer to \cite{MR3587666} and the references therein for more information about the $\bfX$-perimeter.

\smallskip

\begin{definition} \label{def:local-minimizer-perimeter} Given a scalar product  $\scal{\cdot,\cdot}$ on  $\g_1$ with associated collection $\bfX$ of orthonormal bases of $(\g_1,\scal{\cdot,\cdot})$, we say that a subset $E$ of $G$ is a local minimizer for the $\bfX$-perimeter if $E$ has locally finite perimeter and for any open set $\Omega \subset \subset G$ we have $\Per_{\bfX}(E,\Omega) \leq \Per_{\bfX}(F,\Omega)$ for any $\mu$-measurable set $F$ such that $E\triangle F \subset\subset \Omega$.
\end{definition}

\begin{remark} A local minimizer for the $\bfX$-perimeter for some given choice of scalar product on $\g_1$ with associated collection $\bfX$ of orthonormal bases may not be a local minimizer for the $\mathbf{X'}$-perimeter for any other different choice of scalar product on $\g_1$ with associated collection $\mathbf{X'}$ of orthonormal bases. Note that  Theorem~\ref{thm:monotone-implies-locally-finite-perimeter} shows that monotone sets are local minimizers for the perimeter in the stronger sense that they are local minimizer of the $\bfX$-perimeter for any choice of scalar product on $\g_1$. This is in turn coherent with the fact that monotonicity is independent of the choice of a scalar product on $\g_1$.
\end{remark}

From now on in this section, we let $\scal{\cdot,\cdot}$ be a given scalar product on $\g_1$ with associated collection $\bfX$ of orthonormal bases of $(\g_1,\scal{\cdot,\cdot})$. The proof of Theorem~\ref{thm:monotone-minimizer-for-perimeter} saying that monotone sets are local minimizers for the $\bfX$-perimeter will be obtained as a consequence of the kinematic formula \cite[Proposition~3.13]{MR2165404} that relates the $\bfX$-perimeter to 1-dimensional perimeter on lines. 

Given a line $L=(X,x)\in \bbL_{\scal{\cdot,\cdot}}(G)$, a $\calL^1_{L,\scal{\cdot, \cdot}}$-measurable subset $E$ of $L$, and an open subset $U$ of $L$, set
\begin{equation*}
\Per_L(E,U) = \Per_\R(\Phi_X(x,\cdot)^{-1}(E),\Phi_X(x,\cdot)^{-1}(U))
\end{equation*}
where $\Per_\R$ denotes the usual perimeter for $\calL^1$-measurable subsets of $\R$.

Note that it follows from well known properties of finite perimeter subsets of $\R$ (see \cite{MR1857292}) that a $\mu$-measurable set $E$ is monotone if and only if for $\calN_{\scal{\cdot,\cdot}}$-a.e.~$L\in\bbL_{\scal{\cdot,\cdot}(G)}$ the 1-dimensional perimeter $\Per_L(E\cap L,L)$ is either 0 or 1. Note incidentally that if $E$ is $\mu$-measurable then $E\cap L$ is $\calL^1_{L,\scal{\cdot, \cdot}}$-measurable for $\calN_{\scal{\cdot,\cdot}}$-a.e.~$L\in\bbL_{\scal{\cdot,\cdot}}(G)$.

It is proved in \cite{MR2165404} that there is $c_\bfX>0$ such that for any open set $\Omega \subset G$ and  any $\mu$-measurable set $E$ with $\Per_\bfX(E,\Omega) <+\infty$ the map $L \mapsto \Per_L(E\cap L, \Omega \cap L)$ lies in $L^1(\bbL_{\scal{\cdot,\cdot}(G)},\calN_{\scal{\cdot,\cdot}})$ and one has 
\begin{equation} \label{e:kinematic-formula}
\Per_\bfX(E,\Omega) =  c_\bfX \int_{\bbL_{\scal{\cdot,\cdot}}(G)} \Per_L(E\cap L, \Omega \cap L) \, d\calN_{\scal{\cdot,\cdot}}(L).
\end{equation}

\smallskip

\begin{proof}[Proof of Theorem~\ref{thm:monotone-minimizer-for-perimeter}]
Let $E$ be monotone. We know from Theorem~\ref{thm:monotone-implies-locally-finite-perimeter} that $E$ has locally finite perimeter and thus it remains to verify that $E$ satisfies the minimality condition given in Definition~\ref{def:local-minimizer-perimeter}.  Let $\Omega \subset \subset G$ be open and $F$ be $\mu$-measurable such that $E\triangle F \subset\subset \Omega$. Assume that $\Per_\bfX(F,\Omega)<+\infty$ since otherwise we obviously have $\Per_\bfX(E,\Omega)\leq \Per_\bfX(F,\Omega)$. By the kinematic formula~\eqref{e:kinematic-formula} it is sufficient to prove that
\begin{equation} \label{e:minimizer}
\Per_L(E\cap L, \Omega\cap L) \leq \Per_L(F\cap L, \Omega \cap L)
\end{equation}
for $\calN_{\scal{\cdot,\cdot}}$-a.e. $L\in\bbL_{\scal{\cdot,\cdot}}(G)$.

Let $L= (X,x)\in \bbL_{\scal{\cdot,\cdot}}(G)$ be such that $F\cap L$ is $\calL^1_{L,\scal{\cdot,\cdot}}$-measurable and $L$ is monotone for $E$. Since both conditions hold for $\calN_{\scal{\cdot,\cdot}}$-a.e.~line, we only need to verify \eqref{e:minimizer} for such a line. If $\Per_L(E\cap L, \Omega\cap L) =0$ then obviously $\Per_L(E\cap L, \Omega\cap L) \leq \Per_L(F\cap L, \Omega \cap L)$. If $\Per_L(E\cap L, \Omega\cap L)=1$ then $E\cap L$ is equivalent to a half-line $I$ in $L$ whose endpoint $x_I$ is contained in $\Omega$. Set $\Phi(\cdot) = \Phi_X(x,\cdot)$ and recall that $\Phi$ is a homeomorphism from $\R$ to $L$. Let $J_L$ denote the connected component of $\Phi^{-1}(\Omega)$ containing $t_I = \Phi^{-1}(x_I)$. Set $E_L = \Phi^{-1}(E)$ and $F_L = \Phi^{-1}(F)$. Since $(E\triangle F) \subset \subset \Omega$,  we have $(E_L \triangle F_L) \cap J_L \subset \subset J_L$. Let $U_L \subset \subset J_L$ be open and such that $(E_L \triangle F_L) \cup \{t_I\} \subset U_L$. Since $J_L$ is a non empty open bounded interval of $\R$, both $(-\infty, t_L) \cap J_L \setminus \overline{U_L}$ and $(t_L,+\infty) \cap J_L \setminus \overline{U_L}$ are open and non empty. It follows that 
\begin{equation*}
\begin{split}
\min &\left\{ \calL^1 (F_L \cap J_L), \calL^1 (F_L^c \cap J_L ) \right\} \\
 &\geq \min \left\{ \calL^1 (F_L \cap J_L \setminus \overline{U_L}), \calL^1 (F_L^c \cap J_L \setminus \overline{U_L}) \right\} \\
&= \min \left\{ \calL^1 ((t_L,+\infty) \cap J_L \setminus \overline{U_L}), \calL^1 ((-\infty, t_L) \cap J_L \setminus \overline{U_L}) \right\} >0
\end{split}
\end{equation*}
which implies that $\Per_\R(F_L,J_L)\geq 1$. Since $\Per_L(F\cap L, \Omega \cap L) \geq \Per_\R(F_L,J_L)$, this concludes the proof.
\end{proof}

We now turn to the study of local miminizers for the $\bfX$-perimeter. We recall that given a Borel measure $\nu$ on $G$, we denote by $\spt \nu$ its support. Given a $\mu$-measurable set $E$, we denote by $\spt_\mu(E)$ the support of the restriction of $\mu$ to $E$,
\begin{equation*}
\begin{split}
\spt_\mu (E) &= G \setminus \bigcup \{U\subset G:\, U \text{ open},\, \mu(E \cap U)=0\}\\
&= \{x\in G:\, \mu(E\cap B_d(x,r)) >0 \,\, \forall r>0\},
\end{split}
\end{equation*} 
we set 
\begin{equation*}
\begin{split}
\Int_\mu(E) &= G \setminus \spt_\mu (E^c)\\
&=\{x\in G:\exists r>0 \text{ such that } \mu(E^c\cap B_d(x,r)) =0\}
\end{split}
\end{equation*} 
and 
\begin{equation*} 
\begin{split}
\partial_\mu E &= \spt_\mu (E) \cap \spt_\mu (E^c) \\
&=\{x\in G: \min\{\mu(E\cap B_d(x,r)),\mu(E^c\cap B_d(x,r))\} >0 \,\, \forall r>0\}.
\end{split}
\end{equation*} 
Here $d$ denotes any left-invariant homogeneous distance, or more generally, any distance $d$ whose topology coincides with the topology induced by the Lie group structure of $G$.

\smallskip

To put Theorem~\ref{thm:minimizer-for-perimeter} into perspective, note first that for any $\mu$-measurable set $E$ and any left-invariant homogeneous distance $d$ we have
\begin{equation*}
\partial^* E = \{x\in G:\, \limsup_{r\downarrow 0} h_d(x,r) >0\} \subset \partial_\mu E = \{x\in G:\, \ h_d(x,r) >0 \,\, \forall r>0\}
\end{equation*}
where 
\begin{equation} \label{e:def-h}
h_d(x,r) = \min\left\{\frac{\mu(E\cap B_d(x,r))}{\mu(B_d(x,r))}, \frac{\mu(E^c \cap B_d(x,r))}{\mu(B_d(x,r))}\right\}.
\end{equation} 
This inclusion is strict in general and it may even happen that $\mu(\partial_\mu E \setminus \partial^* E) = +\infty$ already for sets with locally finite perimeter. Recall indeed that if $E$ has locally finite perimeter then the measure $\Per_\bfX(E,\cdot)$ is concentrated on $\partial^* E$ and  the restriction of $\calH^{Q-1}_d$ to $\partial^* E$ is a locally finite measure for any left-invariant homogeneous distance $d$ on $G$. We refer to \cite[Theorems~4.2 and 4.3]{MR1823840} for more precise statements that should be compared with \eqref{e:meas-theoretic-boundary-uniform-lower-densities} and \eqref{e:boundaries}. Recall also that if $E$ has locally finite perimeter then $\spt \Per_\bfX(E,\cdot) = \partial_\mu E$. Indeed, on the one hand it follows from the relative isoperimetric inequality \cite[Theorem~1.18]{MR1404326} that $\partial_\mu E \subset \spt \Per_\bfX(E,\cdot)$. On the other hand, one knows that $\Per_\bfX(E,\cdot)$ is concentrated on $\partial^* E$ and hence on $\partial_\mu E$. Since $\partial_\mu E$ is closed and $\spt \Per_\bfX(E,\cdot)$ is the smallest closed set on which $\Per_\bfX(E,\cdot)$ is concentrated, we get that $\spt \Per_\bfX(E,\cdot) = \partial_\mu E$. We stress that there are examples of sets $E$ with locally finite perimeter for which $\mu(\spt \Per_\bfX(E,\cdot)) =+\infty$, i.e., $\mu(\partial_\mu E \setminus \partial^* E) = +\infty$. Note also that any of the following topological boundaries, $\partial \Int_\mu(E)$, $\partial \spt_\mu (E)$, $\partial E_1$, and $\partial E_0$ is a subset of $\partial_\mu E$, which is in turn contained in the topological boundary of $E$. Once again, any of these inclusions may be strict. These observations should be compared with \eqref{e:meas-theoretic-boundary-uniform-lower-densities} and \eqref{e:boundaries}.
 
\smallskip

Next, note that for any $\mu$-measurable set $E$ we have 
\begin{equation*}
\Int(E) \subset \Int_\mu(E) \subset E_1 \subset G\setminus E_0 \subset \spt_\mu (E) \subset \overline{E}
\end{equation*}
and any of these inclusions may be strict. It may indeed happen that $\mu(E_1 \setminus \Int_\mu(E))=+\infty$, or $\mu(\spt_\mu(E) \cap E_0) =+\infty$, already for sets $E$ with locally finite perimeter. This should be compared with \eqref{e:interior-and-support}.

\smallskip

Concerning topological properties, neither $E_1$, nor $E_0$, nor $\partial^* E$, are open nor closed in general, while $\Int_\mu(E)$ is open and $\spt_\mu(E)$ and $\partial_\mu E$ are closed. Therefore \eqref{e:interior-and-support} and \eqref{e:boundaries} contain non trivial topological informations as well. Note also that \eqref{e:regular-open-closed} implies that $\Int_\mu (E)$ coincides with the interior of its closure and $\spt_\mu (E)$ with the closure of its interior.

\smallskip

The main step for the proof of Theorem~\ref{thm:minimizer-for-perimeter} is given in the following lemma. Recall that $d_\bfX$ denotes the sub-Riemannian distance induced by any element in $\bfX$.

\begin{lemma} \label{lem:small-density} There is $\epsilon_\bfX>0$ such that for any local minimizer $E$ for the $\bfX$-perimeter, $x\in G$, $r>0$, one has
\begin{align}
\mu(E\cap B_{d_\bfX}(x,r)) \leq \epsilon_\bfX \mu(B_{d_\bfX}(x,r)) &\Rightarrow \mu(E\cap B_{d_\bfX}(x,r/2)) = 0, \label{e:1}\\
\mu(E^c\cap B_{d_\bfX}(x,r)) \leq \epsilon_\bfX \mu(B_{d_\bfX}(x,r)) &\Rightarrow \mu(E^c\cap B_{d_\bfX}(x,r/2)) = 0. \label{e:2}
\end{align}
\end{lemma}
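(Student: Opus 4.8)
The plan is to prove the standard density (``clearing-out'') estimate for perimeter minimizers, adapted to the sub-Riemannian setting, by deriving a differential inequality for the volume function $m(\rho) = \mu(E\cap B_{d_\bfX}(x,\rho))$. First I would reduce to proving \eqref{e:1} alone. Since the $\bfX$-perimeter is insensitive to complementation, $\Per_\bfX(E,\Omega) = \Per_\bfX(E^c,\Omega)$ (replace $\varphi$ by $-\varphi$ in the defining supremum and use that $\int_G X_i\varphi\,d\mu = 0$ for $\varphi\in C^1_c$), and since $F\mapsto F^c$ satisfies $E^c\triangle F^c = E\triangle F$, a set is a local minimizer if and only if its complement is. Thus \eqref{e:2} for $E$ is exactly \eqref{e:1} for $E^c$, and a single argument yields both with one threshold $\epsilon_\bfX$ depending only on $\bfX$, $Q$, the isoperimetric constant, and the density $c_{d_\bfX} = \mu(B_{d_\bfX}(0,1))$.

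Fix $x$ and abbreviate $B_\rho = B_{d_\bfX}(x,\rho)$, $m(\rho) = \mu(E\cap B_\rho)$. For a.e. $\rho\in(0,r)$ I would test minimality in $\Omega = B_r$ against the competitor $F = E\setminus B_\rho$, which is admissible because $E\triangle F = E\cap B_\rho\subset\subset B_r$. For a.e. $\rho$ the measure $\Per_\bfX(E,\cdot)$ charges no sphere $\partial B_\rho$, and the essential boundary of $F$ inside $B_r$ splits into the part of $\partial^* E$ lying outside $\overline{B_\rho}$ together with a sphere contribution $S(\rho)$ created on $\partial B_\rho$; minimality then gives $\Per_\bfX(E,B_\rho)\le S(\rho)$. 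Combining this with the decomposition $\Per_\bfX(E\cap B_\rho, G) = \Per_\bfX(E,B_\rho) + S(\rho)$, valid for a.e. $\rho$, yields $\Per_\bfX(E\cap B_\rho, G)\le 2\,S(\rho)$.

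The main technical point, and the step I expect to be the chief obstacle, is the coarea bound $S(\rho)\le m'(\rho)$ for a.e. $\rho$. I would obtain it from the coarea formula applied to the distance function $v = d_\bfX(x,\cdot)$, using that $v$ is $1$-Lipschitz with respect to $d_\bfX$: integrating the sphere contributions over an annulus and applying Fubini gives $\int_a^b S(\rho)\,d\rho\le m(b)-m(a)$, whence $S(\rho)\le m'(\rho)$ at a.e. $\rho$ by Lebesgue differentiation of the nondecreasing function $m$. Granting this, the isoperimetric inequality \cite[Theorem~1.18]{MR1404326} applied to the finite-volume set $E\cap B_\rho$ gives $m(\rho)^{(Q-1)/Q}\le C_I\,\Per_\bfX(E\cap B_\rho, G)$, so that $m(\rho)^{(Q-1)/Q}\le 2C_I\, m'(\rho)$ for a.e. $\rho\in(0,r)$.

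Finally I would integrate this differential inequality. At points where $m>0$ it reads $\tfrac{d}{d\rho}\,m(\rho)^{1/Q}\ge (2C_I Q)^{-1}$, and since $m$ is nondecreasing, integration over $[r/2,r]$ gives $m(r)^{1/Q}\ge m(r/2)^{1/Q} + \tfrac{r}{4C_I Q}$ whenever $m(r/2)>0$ (monotonicity keeps $m>0$ on the whole interval). Assuming $m(r)\le \epsilon_\bfX\,\mu(B_r) = \epsilon_\bfX c_{d_\bfX} r^Q$, I would choose $\epsilon_\bfX$ small enough to serve two purposes at once: small enough that $m(\rho)\le\tfrac12\mu(B_\rho)$ throughout $[r/2,r]$, so the isoperimetric inequality correctly selects $m$ as the minority phase (this needs $\epsilon_\bfX\le 2^{-(Q+1)}$), and small enough that $\epsilon_\bfX c_{d_\bfX} < (4C_I Q)^{-Q}$. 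The latter contradicts the lower bound $m(r)\ge (r/4C_I Q)^Q$ forced by $m(r/2)>0$, so necessarily $m(r/2)=0$, i.e. $\mu(E\cap B_{d_\bfX}(x,r/2))=0$. This establishes \eqref{e:1}, and hence \eqref{e:2}, with $\epsilon_\bfX$ independent of $E$, $x$, and $r$.
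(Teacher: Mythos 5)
Your argument is correct and follows essentially the same route as the paper: the competitor $E\setminus B_\rho$ tested against local minimality, the sphere/coarea bound $S(\rho)\lesssim m'(\rho)$ (which is precisely the content of \cite[Lemma~3.5]{MR1823840} invoked in the paper), and the integration of the resulting differential inequality for $m^{1/Q}$ leading to a contradiction with $m(r)\leq \epsilon_\bfX c_{d_\bfX}r^Q$. The only cosmetic difference is that you apply the isoperimetric inequality to the finite-measure set $E\cap B_\rho$ after bounding $\Per_\bfX(E\cap B_\rho,G)\leq 2S(\rho)$, whereas the paper applies the relative isoperimetric inequality to $E$ in $B_t$ and uses doubling together with the smallness of $\epsilon_\bfX$ to identify $\mu(E\cap B_t)$ as the minority term; both variants are standard, and in your version the minority-phase check is actually superfluous.
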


\begin{proof}
Let $\epsilon>0$ to be fixed small later. Let a local minimizer $E$ for the $\bfX$-perimeter, $x\in G$, $r>0$, be given, and assume that $\mu(E\cap B_{d_\bfX}(x,r)) \leq \epsilon \mu(B_{d_\bfX}(x,r))$. Since $E$ is a local minimizer for the $\bfX$-perimeter, we have for $t\in (0,r)$
\begin{equation*}
\begin{split}
&\Per_{\bfX}( E,B_{d_\bfX}(x,r)) \leq \Per_\bfX (E\setminus B_{d_\bfX}(x,t),B_{d_\bfX}(x,r))\\
&= \Per_\bfX(E,B_{d_\bfX}(x,r)\setminus \overline{B_{d_\bfX}(x,t)}) + \Per_\bfX(E\setminus B_{d_\bfX}(x,t),\partial B_{d_\bfX}(x,t))\\
&\leq \Per_\bfX(E,B_{d_\bfX}(x,r)) - \Per_\bfX(E,B_{d_\bfX}(x,t)) + \Per_\bfX(E\setminus B_{d_\bfX}(x,t),\partial B_{d_\bfX}(x,t)),
\end{split}
\end{equation*}
i.e., 
\begin{equation*}
\Per_\bfX(E,B_{d_\bfX}(x,t)) \leq \Per_\bfX (E\setminus B_{d_\bfX}(x,t),\partial B_{d_\bfX}(x,t)).
\end{equation*}
For $t>0$ set $m(t) = \mu(E\cap B_{d_\bfX}(x,t))$. It follows from the previous inequality, the relative isoperimetric inequality \cite[Theorem~1.18]{MR1404326}, and \cite[Lemma~3.5]{MR1823840}, that 
\begin{equation} \label{e:I1}
\min\{\mu(E\cap B_{d_\bfX}(x,t)), \mu(E^c\cap B_{d_\bfX}(x,t))\}^{(Q-1)/Q} \lesssim m'(t)
\end{equation}
for a.e.~$t\in (0,r)$. Here $\lesssim$ means that the inequality holds up to a positive multiplicative constant that depends only on $\bfX$. Since $\mu$ is a doubling measure with respect to $d_\bfX$, we have for $t\in (r/2,r)$
\begin{equation*} 
\mu(E\cap B_{d_\bfX}(x,t)) \leq \mu(E\cap B_{d_\bfX}(x,r)) \leq \epsilon \mu(B_{d_\bfX}(x,r)) \lesssim \epsilon \mu(B_{d_\bfX}(x,t)).
\end{equation*}
Therefore, if $\epsilon>0$ is choosen small enough, we have $\min\{\mu(E\cap B_{d_\bfX}(x,t)), \mu(E^c\cap B_{d_\bfX}(x,t))\} = \mu(E\cap B_{d_\bfX}(x,t))=m(t)$ for all $t\in (r/2,r)$ and we get from \eqref{e:I1} 
\begin{equation*}
m(t)^{(Q-1)/Q} \lesssim m'(t) \quad \text{for a.e.}~t\in (r/2,r).
\end{equation*}
Assume that $m(r/2)>0$ otherwise we are done. Then $1\lesssim m(t)^{(1-Q)/Q} m'(t)$ for a.e.~$t\in (r/2,r)$. Integrating over $(r/2,r)$ and using \eqref{e:uniform-Haar-meas} we get
\begin{equation*}
r \lesssim m(r)^{1/Q} - m(r/2)^{1/Q} \lesssim \epsilon^{1/Q} \mu(B_{d_\bfX}(x,r))^{1/Q} \lesssim \epsilon^{1/Q} r.
\end{equation*}
This gives a contradiction if $\epsilon>0$ is choosen small enough and concludes the proof of \eqref{e:1}. Since a set $E$ is a local minimizer for the $\bfX$-perimeter if and only if $E^c$ is a local minimizer for the $\bfX$-perimeter, \eqref{e:2} follows as well.
\end{proof}

\begin{corollary} \label{cor:main} For any left-invariant homogeneous distance $d$ on $G$ there is $\epsilon >0$ such that for any local minimizer $E$ for the $\bfX$-perimeter one has
\begin{equation} \label{e:description-meas-theoretic-boundary}
\partial_\mu E = \left\{x\in G:\, \min\left\{\frac{\mu(E\cap B_d(x,r))}{\mu(B_d(x,r))}, \frac{\mu(E^c \cap B_d(x,r))}{\mu(B_d(x,r))}\right\} >\epsilon \, \,\forall r>0\right\}.
\end{equation}
\end{corollary}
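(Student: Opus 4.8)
The plan is to prove the two inclusions underlying the equality in \eqref{e:description-meas-theoretic-boundary}. Writing $h_d(x,r)$ for the quantity in \eqref{e:def-h}, recall that $x \in \partial_\mu E$ precisely when $h_d(x,r)>0$ for all $r>0$, so the inclusion $\supseteq$ is immediate: if $h_d(x,r)>\epsilon$ for all $r>0$ then in particular $h_d(x,r)>0$ for all $r>0$. The whole content is therefore the reverse inclusion, which by contraposition amounts to the following uniform improvement of density: there is $\epsilon>0$, depending only on $d$ and $\bfX$ but not on the minimizer $E$, such that if $h_d(x,r)\le\epsilon$ for a single radius $r>0$, then in fact $h_d(x,s)=0$ for some smaller radius $s>0$, so that $x\notin\partial_\mu E$.

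To establish this I would transfer the information between the distance $d$ and the distance $d_\bfX$ for which Lemma~\ref{lem:small-density} is stated. Since any two left-invariant homogeneous distances are biLipschitz equivalent, fix $\lambda\ge 1$ with $B_{d_\bfX}(x,r/\lambda)\subset B_d(x,r)\subset B_{d_\bfX}(x,\lambda r)$ for all $x\in G$, $r>0$. Suppose $h_d(x,r)\le\epsilon$; since $h_d$ is the minimum of the two relative densities, after possibly exchanging the roles of $E$ and $E^c$ (which preserves local minimality) we may assume $\mu(E\cap B_d(x,r))\le\epsilon\,\mu(B_d(x,r))$. Using the inclusion $B_{d_\bfX}(x,r/\lambda)\subset B_d(x,r)$ together with the $Q$-uniformity \eqref{e:uniform-Haar-meas} of $\mu$ to compare $\mu(B_d(x,r))=c_d r^Q$ with $\mu(B_{d_\bfX}(x,r/\lambda))=c_{d_\bfX}(r/\lambda)^Q$, I would bound
\[
\mu(E\cap B_{d_\bfX}(x,r/\lambda))\le \mu(E\cap B_d(x,r))\le \epsilon\,\frac{c_d}{c_{d_\bfX}}\,\lambda^Q\,\mu(B_{d_\bfX}(x,r/\lambda)).
\]

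It then suffices to choose $\epsilon$ so small that $\epsilon\,c_d\,c_{d_\bfX}^{-1}\lambda^Q\le\epsilon_\bfX$, where $\epsilon_\bfX$ is the threshold from Lemma~\ref{lem:small-density}. With this choice, implication \eqref{e:1} applied at radius $r/\lambda$ yields $\mu(E\cap B_{d_\bfX}(x,r/(2\lambda)))=0$, and the inclusion $B_d(x,r/(2\lambda^2))\subset B_{d_\bfX}(x,r/(2\lambda))$ then forces $\mu(E\cap B_d(x,r/(2\lambda^2)))=0$, i.e. $h_d(x,r/(2\lambda^2))=0$ and $x\notin\partial_\mu E$. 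The case $\mu(E^c\cap B_d(x,r))\le\epsilon\,\mu(B_d(x,r))$ is handled identically using \eqref{e:2} in place of \eqref{e:1}. There is no real obstacle beyond the careful bookkeeping of the comparison constants; the one point to keep an eye on is that $\lambda$, $c_d$, $c_{d_\bfX}$, $Q$, and $\epsilon_\bfX$ are all independent of $E$, so that the resulting $\epsilon$ depends only on $d$ and $\bfX$ and the displayed description in \eqref{e:description-meas-theoretic-boundary} holds simultaneously for every local minimizer.
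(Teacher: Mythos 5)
Your proposal is correct and follows essentially the same route as the paper: the containment $\supseteq$ is immediate from the characterization of $\partial_\mu E$, and the reverse inclusion is obtained from the density-improvement Lemma~\ref{lem:small-density} combined with the biLipschitz equivalence of left-invariant homogeneous distances and the $Q$-uniformity of $\mu$ to transfer between $d$ and $d_\bfX$. You merely make the comparison constants explicit where the paper leaves them implicit.
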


\begin{proof}
Let $E$ be a local minimizer for the $\bfX$-perimeter. Recall that 
\begin{equation*}
\partial_\mu E = \{x\in G:\, h_d(x,r) >0 \, \,\forall r>0\}
\end{equation*}
for any left-invariant homogeneous distance $d$ on $G$, where $h_d(x,r)$ is given by \eqref{e:def-h}. Then, as an immediate consequence of Lemma~\ref{lem:small-density}, one gets the existence of $\epsilon_\bfX>0$ such that 
\begin{equation*}
\partial_\mu E = \{x\in G:\, h_{d_\bfX}(x,r) >\epsilon_\bfX \, \,\forall r>0\}.
\end{equation*} 
Any two left-invariant homogeneous distances on $G$ are biLipschitz equivalent and $\mu$ is doubling with respect to $d_\bfX$. Therefore, for any left-invariant homogeneous distance $d$ on $G$, there is $\epsilon >0$ such that
\begin{equation*}
\{x\in G:\, h_{d_\bfX}(x,r) >\epsilon_\bfX \, \,\forall r>0\} \subset \{x\in G:\, h_d(x,r) > \epsilon \, \,\forall r>0\},
\end{equation*} 
which in turn implies \eqref{e:description-meas-theoretic-boundary}.
\end{proof}

\begin{lemma} \label{lem:consequence-of-corollary}
Let $E$ be $\mu$-measurable and assume that \eqref{e:description-meas-theoretic-boundary} holds for some left-invariant homogeneous distance $d$ on $G$ and some $\epsilon>0$. Then
\begin{align}
& \Int_\mu(E) = E_1 \text{ and }  \spt_\mu(E) = G \setminus E_0, \label{e:lem-interior-and-support} \\
& \partial_\mu E = \partial^* E =\partial \Int_\mu(E) = \partial \spt_\mu(E), \label{e:lem-boundaries}\\
& \overline{\Int_\mu(E)} = \spt_\mu(E) \text{ and } \Int(\spt_\mu(E)) = \Int_\mu(E). \label{e:lem-regular-open-closed}
\end{align}
\end{lemma}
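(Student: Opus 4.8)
The plan is to deduce the three groups of identities from the uniform lower density bound \eqref{e:description-meas-theoretic-boundary}, the general chain of inclusions $\Int(E)\subset \Int_\mu(E)\subset E_1 \subset G\setminus E_0 \subset \spt_\mu(E)\subset \overline E$ recalled before the lemma, and the fact from \eqref{e:lebesgue-representative} that $E_1$ and $G\setminus E_0$ are measure-theoretic representatives of $E$. First I would establish \eqref{e:lem-interior-and-support}; in view of the chain it suffices to prove $\spt_\mu(E)\subset G\setminus E_0$. I would argue by contradiction: if $x\in \spt_\mu(E)\cap E_0$, then $x\in E_0$ forces $x\in \spt_\mu(E^c)$, so $x\in \partial_\mu E$ and hence $h_d(x,r)>\epsilon$ for all $r>0$ by \eqref{e:description-meas-theoretic-boundary}, whereas $x\in E_0$ also gives $\lim_{r\downarrow 0}h_d(x,r)=0$, a contradiction. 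Since $h_d$ in \eqref{e:def-h} and the hypothesis are symmetric under exchanging $E$ and $E^c$ and $\partial_\mu E=\partial_\mu E^c$, the same statement applied to $E^c$ yields $\spt_\mu(E^c)=G\setminus E_1$, whence $\Int_\mu(E)=G\setminus\spt_\mu(E^c)=E_1$.

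Next I would record the elementary decomposition $\spt_\mu(E)=\Int_\mu(E)\sqcup\partial_\mu E$, obtained by splitting $\spt_\mu(E)$ according to membership in $\spt_\mu(E^c)$ and using $\Int_\mu(E)\subset\spt_\mu(E)$. With this in hand the identity $\partial_\mu E=\partial^* E$ of \eqref{e:lem-boundaries} is immediate: the inclusion $\partial^* E\subset\partial_\mu E$ is general, while any $x\in\partial_\mu E$ satisfies $h_d(x,r)>\epsilon$ for every $r$, so $\limsup_{r\downarrow 0}h_d(x,r)\ge\epsilon>0$ and $x\in\partial^* E$.

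The core of the argument is the passage from measure-theoretic to topological boundaries. Once \eqref{e:lem-interior-and-support} is known, \eqref{e:lebesgue-representative} reads $\mu(E\triangle\Int_\mu(E))=0$ and $\mu(E^c\triangle\Int_\mu(E^c))=0$, where $\Int_\mu(E^c)=G\setminus\spt_\mu(E)$; thus $\Int_\mu(E)$ represents $E$ and $G\setminus\spt_\mu(E)$ represents $E^c$. For $x\in\partial_\mu E$ the bound $h_d(x,r)>\epsilon$ gives $\mu(E\cap B_d(x,r))>0$ and $\mu(E^c\cap B_d(x,r))>0$ for all $r$; passing to the above representatives, positivity of the $\mu$-measure forces $\Int_\mu(E)$ and $G\setminus\spt_\mu(E)$ to meet every ball $B_d(x,r)$. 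This gives $x\in\overline{\Int_\mu(E)}$ and $x\notin\Int(\spt_\mu(E))$. Combined with the decomposition of the previous paragraph this yields $\overline{\Int_\mu(E)}=\spt_\mu(E)$ and $\Int(\spt_\mu(E))=\Int_\mu(E)$, i.e. \eqref{e:lem-regular-open-closed}; the remaining boundary identities of \eqref{e:lem-boundaries} then follow formally, since $\Int_\mu(E)$ is open and $\spt_\mu(E)$ is closed, by subtracting $\Int_\mu(E)$ from $\overline{\Int_\mu(E)}=\spt_\mu(E)=\Int_\mu(E)\sqcup\partial_\mu E$.

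The hard part is exactly this last bridge. All the inclusions between $\Int_\mu(E)$, $\spt_\mu(E)$, $E_1$, $E_0$ are one-sided in general and the reverse inclusions can genuinely fail, as the discussion preceding the lemma notes; the uniform density bound \eqref{e:description-meas-theoretic-boundary} is precisely what closes each gap. The care required is therefore to invoke the bound only on $\partial_\mu E$, and to use the representative property \eqref{e:lebesgue-representative} --- available only after \eqref{e:lem-interior-and-support} has been proved --- to convert positivity of $\mu$-measure in a ball into nonempty intersection of the relevant open set with that ball.
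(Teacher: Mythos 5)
Your proof is correct and follows essentially the same route as the paper: the uniform density bound \eqref{e:description-meas-theoretic-boundary} is used to show $\partial_\mu E$ is disjoint from $E_0\cup E_1$ (you phrase it as $\spt_\mu(E)\cap E_0=\emptyset$, the paper as $E_1\cap\partial_\mu E=\emptyset$; these are dual), and the representative property \eqref{e:lebesgue-representative} converts positivity of measure in balls into topological adherence of $\Int_\mu(E)$ and $\Int_\mu(E^c)$ at points of $\partial_\mu E$. The only organizational difference is that you derive \eqref{e:lem-regular-open-closed} before \eqref{e:lem-boundaries} whereas the paper does the reverse; given the decomposition $\spt_\mu(E)=\Int_\mu(E)\sqcup\partial_\mu E$ each follows formally from the other, so this is immaterial.
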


\begin{proof}
To prove \eqref{e:lem-interior-and-support}, recall first that $\Int_\mu(E) \subset E_1$. Conversely, note that 
\begin{equation*}
E_1 \subset \{x\in G:\, \lim_{r\downarrow 0} h_d(x,r)=0 \}
\end{equation*}
where $h_d(x,r)$ is given by \eqref{e:def-h}. Then we get from \eqref{e:description-meas-theoretic-boundary} that $E_1 \cap \partial_\mu E = \emptyset$. Since $E_1 \subset \spt_\mu(E)$, it follows that $E_1 \subset \spt_\mu(E) \setminus \partial_\mu E = \Int_\mu(E)$. Therefore $\Int_\mu(E) = E_1$. Exchanging the role of $E$ and $E^c$, we get that $\Int_\mu(E^c) = E_0$ and hence $\spt_\mu(E) = G \setminus \Int_\mu(E^c) = G \setminus  E_0$, which concludes the proof of \eqref{e:lem-interior-and-support}. To prove \eqref{e:lem-boundaries}, note that the first equality is an immediate consequence of \eqref{e:lem-interior-and-support}, namely,
\begin{equation*}
\partial_\mu E = \spt_\mu(E) \setminus \Int_\mu(E) = G\setminus (E_0 \cup E_1) = \partial^* E.
\end{equation*}
Next, recall that 
\begin{equation*}
\partial \Int_\mu(E) \cup \partial \Int_\mu(E^c) \subset  \partial_\mu E.
\end{equation*}
On the other hand, \eqref{e:lem-interior-and-support} and \eqref{e:lebesgue-representative} imply that $\mu(E \triangle \Int_\mu(E)) = \mu(E\triangle E_1) = 0$ and $\mu(E^c \triangle \Int_\mu(E^c)) =\mu(E^c\triangle E_0) = 0$ and we get from the definition of $\partial_\mu E$ that 
\begin{equation*}
\partial_\mu E \subset \partial \Int_\mu(E) \cap \partial \Int_\mu(E^c).
\end{equation*}
It follows that $\partial_\mu E = \partial \Int_\mu(E) = \partial \Int_\mu(E^c) = \partial (G\setminus \Int_\mu(E^c)) = \partial \spt_\mu(E)$, which concludes the proof of \eqref{e:lem-boundaries}. To prove \eqref{e:lem-regular-open-closed}, we use the fact that $\Int_\mu(E)$ is open together with \eqref{e:lem-boundaries} to get that 
\begin{equation*}
\overline{\Int_\mu(E)} = \Int_\mu(E) \cup \partial \Int_\mu(E) = \Int_\mu(E) \cup \partial_\mu E = \spt_\mu(E).
\end{equation*}
Exchanging the role of $E$ and $E^c$, we get 
\begin{equation*}
\overline{\Int_\mu(E^c)} =\spt_\mu(E^c) = G \setminus \Int_\mu (E).
\end{equation*}
Since $\spt_\mu(E) = G\setminus \Int_\mu(E^c)$, it follows that
\begin{equation*}
\Int(\spt_\mu(E)) = G \setminus \overline{\Int_\mu(E^c)} = \Int_\mu (E),
\end{equation*}
which concludes the proof.
\end{proof}

Theorem~\ref{thm:minimizer-for-perimeter} follows from Corollary~\ref{cor:main} and Lemma~\ref{lem:consequence-of-corollary}, recalling that if $E$ has locally finite perimeter then $\spt \Per_\bfX(E,\cdot) = \partial_\mu E$. The first part of Corollary~\ref{cor:minimizer-for-perimeter} is an immediate consequence of \eqref{e:interior-and-support} and \eqref{e:lebesgue-representative} while the second part follows from \eqref{e:boundaries} together with the local finiteness of the restriction of $\calH^{Q-1}_d$ to $\partial^* E$ for any set $E$ with locally finite perimeter and any left-invariant homogeneous distance $d$.

\begin{remark}  \label{rmk:AR-CB} Arguing as in \cite[Section~5]{MR2000099}, one can also deduce from local minimality and Lemma~\ref{lem:small-density} that if $E$ is a local minimizer for the $\bfX$-perimeter then $\partial_\mu E$ is a codimension-1 Ahlfors-regular set and $\Int_\mu(E)$ satisfies the so-called condition~B.
\end{remark}

\section{Measure-theoretic interior and support of monotone sets} \label{sect:int-spt-precisely-monotone}

 In this section we prove Theorem~\ref{thm:int-spt-precisely-monotone}, i.e., we prove that under assumption~(H) the measure-theoretic interior and support of a monotone set are precisely monotone. This will be obtained as a rather immediate consequence of Lemma~\ref{lem:main} which is inspired by \cite[Proposition~5.8]{MR2729270}.

\smallskip

We fix a scalar product $\scal{\cdot,\cdot}$ on $\g_1$. Given a line $L\in \bbL_{\scal{\cdot,\cdot}}(G)$ and $x,y\in L$, we denote by $(x,y)$ the open connected subset of $L$ with extremities $x$ and $y$.

\begin{lemma} \label{lem:main} Assume that (H) holds. Let $E$ be monotone, $L \in \bbL_{\scal{\cdot,\cdot}}(G)$, and $x,y \in L$. Then the following hold:
\begin{itemize}
\item[(i)] If $x\in \spt_\mu (E)$ and $y\in \Int_\mu(E)$ then the open segment $(x,y)$ is contained in $\Int_\mu(E)$.
\item[(ii)] If $x\in \spt_\mu (E^c)$ and $y\in \Int_\mu(E)$ then the connected component of $L\setminus \{y\}$ not containing $x$ lies in $\Int_\mu(E)$.
\end{itemize}
The same statements hold with the role of $E$ and $E^c$ exchanged.
\end{lemma}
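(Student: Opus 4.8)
The plan is to prove the two assertions by exploiting condition~(H) to produce, for the horizontal direction $X$ orienting $L$, an integer $p\geq 2$ such that the product map $\Gamma_p$ is open at the diagonal point $(X,\dots,X)$. The key idea, following the spirit of \cite[Proposition~5.8]{MR2729270}, is that a point $z$ on the open segment $(x,y)$ can be written as $z=w\cdot \exp(tX)$ for a suitable intermediate parameter, and that by factoring the displacement from $x$ to $y$ through $p$ small horizontal steps we can realize points near $z$ as products $\exp X_1 \cdots \exp X_p$ translated appropriately. Openness of $\Gamma_p$ then guarantees that a whole neighborhood of $z$ is reached by lines whose endpoints sit near $x$ and $y$; monotonicity along almost every such line, combined with the membership hypotheses $x\in\spt_\mu(E)$ (so lines through a neighborhood of $x$ meet $E$ in positive measure) and $y\in\Int_\mu(E)$ (so lines through a neighborhood of $y$ miss $E^c$ up to null sets), forces a positive-measure-in-$E$, null-in-$E^c$ picture near $z$, which is exactly $z\in\Int_\mu(E)$.

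\textbf{Step-by-step plan for (i).} First I would fix the orienting direction $X\in\bbS(\g_1)$ of the line $L=(X,w)$, apply (H) to obtain $p\geq 2$ with $\Gamma_p$ open at $(X,\dots,X)$, and parameterize so that $x=\Phi_X(w,a)$, $y=\Phi_X(w,b)$ with $a<b$, and fix $z=\Phi_X(w,c)$ with $a<c<b$. Next I would express $z$ as an endpoint of a concatenation of $p$ horizontal exponentials based near $x$: writing the segment increments so that $\exp(s_1 X)\cdots\exp(s_p X)$ with $s_i$ summing appropriately lands at $z$, and noting that perturbing the $p$ tuple of horizontal vectors in $(\g_1)^p$ near $(X,\dots,X)$ moves $\Gamma_p$ openly. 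Then I would translate the openness statement into a statement about lines: for a positive-measure family of nearby horizontal directions and base points, the corresponding lines pass close to $x$, close to $y$, and through a neighborhood $V$ of $z$. The crucial measure-theoretic input is that, since $x\in\spt_\mu(E)$, every neighborhood of $x$ has positive $E$-measure, hence a positive-$\calN_{\scal{\cdot,\cdot}}$-measure set of these lines meets $E$ in positive $\calL^1_{L',\scal{\cdot,\cdot}}$-measure near $x$; since $y\in\Int_\mu(E)$, a neighborhood of $y$ is $\mu$-null in $E^c$, so almost all these lines meet $E^c$ in null measure near $y$. Monotonicity of $E$ along almost every such line then pins the half-line structure: the line enters $E$ before reaching the neighborhood of $y$ and stays in $E$ afterward, so in particular it lies in $E$ (up to null) throughout $V\cap L'$. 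Integrating over the positive-measure family via the fibered representation $\mu=\Phi_{X'}{}_*(\mu_{X'}\otimes dt)$ shows $\mu(E^c\cap V)=0$, i.e. $z\in\Int_\mu(E)$.

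\textbf{Step-by-step plan for (ii).} For the second assertion the geometry is the same but the conclusion is one-sided. Here $x\in\spt_\mu(E^c)$ and $y\in\Int_\mu(E)$, and I would consider a point $z$ in the connected component of $L\setminus\{y\}$ \emph{not} containing $x$, i.e. $z$ lies on the far side of $y$ from $x$. The same openness-of-$\Gamma_p$ machinery produces a positive-measure family of lines passing near $x$, near $y$, and through a neighborhood of $z$; along almost every such line monotonicity gives a half-line description, and the fact that $x\in\spt_\mu(E^c)$ forces a positive-measure presence of $E^c$ near $x$, while $y\in\Int_\mu(E)$ forces the line to be (up to null) entirely in $E$ past $y$. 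Because $z$ is strictly beyond $y$ on the side away from $x$, the monotone half-line that contains $E$ must contain $z$ as well, giving $\mu(E^c\cap V)=0$ and hence $z\in\Int_\mu(E)$. The statements with $E$ and $E^c$ exchanged follow at once because $E$ is monotone if and only if $E^c$ is, and $\Int_\mu$, $\spt_\mu$ are related by complementation as recorded in Section~\ref{sect:monotone-local-min-perimeter}.

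\textbf{Main obstacle.} I expect the principal difficulty to be the rigorous passage from the pointwise openness of $\Gamma_p$ at the diagonal to a genuinely positive-$\calN_{\scal{\cdot,\cdot}}$-measure family of lines for which monotonicity, the spt/Int density hypotheses, and the fibered integration can all be applied simultaneously. Concretely, openness only supplies that images of neighborhoods are neighborhoods, and one must carefully convert the $p$ intermediate horizontal vectors into bona fide line data $(X',x')\in\bbL_{\scal{\cdot,\cdot}}(G)$ with controlled base points near $x$ and $y$, ensure the relevant sets of directions and base points have positive measure so that monotonicity holds for almost every chosen line, and verify that the neighborhood $V$ of $z$ is swept with positive Haar measure so that the null-$E^c$ conclusion along individual lines integrates to $\mu(E^c\cap V)=0$. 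Handling the endpoint bookkeeping (which side of $y$ contains $z$, and how the half-line endpoint is constrained by the density conditions at $x$) is where the argument needs the most care.
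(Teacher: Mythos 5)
Your overall geometric picture is right, but the measure-theoretic core of your argument does not close, and the gap sits exactly where you flag the ``main obstacle''. From $x\in\spt_\mu(E)$ you can only extract, by Fubini, a \emph{positive}-$\calN_{\scal{\cdot,\cdot}}$-measure family of nearby lines meeting $E$ in positive length near $x$; for those lines monotonicity does force $E^c$ to be null along the portion near $z$. But integrating a nullity statement over a positive- (not full-) measure family of lines only yields $\mu(E\cap V)>0$ for a neighborhood $V$ of $z$, i.e.\ $z\in\spt_\mu(E)$ --- it says nothing about the remaining lines, along which $E\cap L'$ could perfectly well be the half-line pointing away from $x$, putting a positive $\mu$-measure portion of $E^c$ inside $V$. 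So your scheme proves membership in the measure-theoretic support, not in the measure-theoretic interior, and it cannot be repaired within the single-segment framework: $U\cap E$ is merely a set of positive measure near $x$, so ``almost every line through $U$ meets $E$ near $x$'' is false in general. Your direct treatment of (ii) has the same positive-versus-full measure defect.

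This is precisely why the paper needs $p\geq 2$ and the openness of $\Gamma_p$, which in your write-up play only a decorative role. The actual argument runs in the opposite order: following \cite[Lemma~5.9]{MR2729270}, one first shows that for $\mu$-a.e.\ \emph{single} point $x'\in U\cap E$ and for a.e.\ $(X_1,\dots,X_p)$ in a neighborhood $V$ of $(X,\dots,X)$ one has $x'\cdot\Gamma_p(X_1,\dots,X_p)\in E$ (iterating the one-dimensional monotonicity argument $p$ times, each intermediate line passing through the ball around $y$ on which $E^c$ is null). This is a \emph{full}-measure statement in the parameter space $(\g_1)^p$, and the openness of $\Gamma_p$ at the diagonal, combined with Sard's theorem and the coarea formula, is what transports it to a full-measure statement on an open neighborhood $x'\cdot\calV$ of $z$ in $G$, giving $\mu(E^c\cap x'\cdot\calV)=0$. (With a single segment this push-forward is hopeless: $x'\cdot\exp(V_1)$ is a submanifold of dimension $\dim\g_1<\dim\g$, so no single $x'$ can fill a neighborhood of $z$.) The hypothesis $x\in\spt_\mu(E)$ enters only at the very end, to guarantee that at least one good $x'$ exists close enough to $x$ that $z\in x'\cdot\calV$. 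Finally, the paper does not prove (ii) directly but deduces it from (i) applied to both $E$ and $E^c$, using $\Int_\mu(E)\cap\Int_\mu(E^c)=\emptyset$ and then a second application of (i); you will need some such two-step argument there as well, since a one-shot half-line bookkeeping again only lands you in $\spt_\mu(E)$.
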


\begin{proof} Let $E$ be monotone. Let $X\in \g_1 \setminus \{0\}$, $x\in G$, $p\in \N \setminus\{0,1\}$, $t>p$. Set $z=x\cdot \exp(pX) = x\cdot \Gamma_p(X,\dots,X)$ and $y= x\cdot \exp(tX)$. Assume that $y \in \Int_\mu(E)$. Then, arguing as in \cite[Lemma~5.9]{MR2729270}, we get that there is an open neighborhood $U$ of $x$ in $G$ and an open neighborhood $V$ of $(X,\dots,X)$ in $(\g_1)^p$ such that for $\mu$-a.e.~$x'\in U \cap E$ we have $x' \cdot \Gamma_p(X_1,\dots,X_p) \in E$ for a.e.~$(X_1,\dots,X_p) \in V$. Here we equip $(\g_1)^p$ with its canonical Lebesgue measure of dimension $p\dim\g_1$. If $\Gamma_p$ is open at $(X,\dots,X)$ then there is an open neighborhood $\calV$ of $\Gamma_p(X,\dots,X)$ such that $\calV \subset \Gamma_p(V)$. If moreover $x\in \spt_\mu(E)$ and since the map $x' \mapsto x'^{-1} \cdot z$ is continuous, one can find $x'\in U \cap E$ close enough to $x$ so that $z \in x' \cdot \calV$ and so that there is a full measure subset $V'$ of $V$ such that $x'\cdot \Gamma_p(V') \subset E$. Then we have 
\begin{equation*}
\mu (E^c \cap x' \cdot \calV) \leq \mu(x'\cdot (\Gamma_p(V) \setminus \Gamma_p(V'))) = \mu (\Gamma_p(V) \setminus \Gamma_p(V')).
\end{equation*}
Since $\Gamma_p$ is a smooth map, we know from Sard's Theorem that $\Gamma_p^{-1}(\{u\})$ is a $m$-dimensional submanifold of $(\g_1)^p$ for $\mu$-a.e.~$u \in \Gamma_p(V)$ where $m=p\dim \g_1 -\dim \g$. This implies in particular that $\calH^m(\Gamma_p^{-1}(\{u\})) >0$ where $\calH^m$ denotes the $m$-dimensional Euclidean Hausdorff measure on $(\g_1)^p$. On the other hand, the coarea formula asserts that for $\mu$-a.e.~$u \in G$ we have $\calH^m(\Gamma_p^{-1}(\{u\}) \cap V\setminus V')=0$. It follows that $\calH^m(\Gamma_p^{-1}(\{u\}) \cap V')>0$ and therefore $\Gamma_p^{-1}(\{u\}) \cap V' \not= \emptyset$ for $\mu$-a.e.~$u \in \Gamma_p(V)$, i.e., $\mu (\Gamma_p(V) \setminus \Gamma_p(V'))$. Therefore $x' \cdot \calV$ is an open neighborhood of $z$ such that $\mu (E^c \cap x' \cdot \calV) =0$, i.e., $z\in\Int_\mu(E)$.

Let us now prove (i). Let $x\in \spt_\mu(E)$ and $y\in \Int_\mu(E)$ be such that $x^{-1}\cdot y \in \exp(\g_1 \setminus\{0\})$. Let $z\in (x,y)$ and $Z\in \g_1 \setminus\{0\}$ be such that $z=x\cdot \exp Z$. By assumption~(H) there is an integer $p\geq 2$ such that the map $\Gamma_p$ is open at $(Z,\dots,Z) \in (\g_1)^p$. Since dilations are homeomorphisms and group homomorphisms, $\Gamma_p$ is open at $(\lambda Z,\dots,\lambda Z) \in (\g_1)^p$ for any $\lambda>0$. In particular $\Gamma_p$ is open at $(X,\dots,X)\in (\g_1)^p$ where $X = Z/p$ so that $z= x\cdot \Gamma_p(X,\dots,X)$. Then it follows from the previous observation that $z\in\Int_\mu(E)$, which concludes the proof of (i).

Since $E^c$ is monotone, statement~(i) also holds with the role of $E$ and $E^c$ exchanged. Since $\Int_\mu(E) \cap \Int_\mu(E^c) = \emptyset$, this implies that if $x \in \spt_\mu(E^c)$ and $y\in \Int_\mu(E)$ then the connected component of $L\setminus \{y\}$ not containing $x$ lies in $G \setminus \Int_\mu(E^c) = \spt_\mu(E)$. To conclude the proof of~(ii) we use statement~(i) for $E$ to infer that for any $z$ that lies in the connected component of $L\setminus \{y\}$ not containing $x$ we have $(y,z) \subset \Int_\mu(E)$. 
\end{proof}

\begin{proof}[Proof of Theorem~\ref{thm:int-spt-precisely-monotone}]
Let $E$ be monotone and $L\in \bbL_{\scal{\cdot,\cdot}}(G)$. Lemma~\ref{lem:main}(i) implies  that $\Int_\mu(E) \cap L$ is connected while Lemma~\ref{lem:main}(ii) implies that $\spt_\mu(E^c) \cap L = L\setminus \Int_\mu(E)$ is connected. Therefore $\Int_\mu(E)$ is precisely monotone. Exchanging the role of $E$ and $E^c$, we get that $\Int_\mu(E^c) = G\setminus \spt_\mu(E)$ is precisely monotone. Therefore $\spt_\mu(E)$ is precisely monotone.
\end{proof}

\begin{remark} \label{rmk:classification}
Theorem~\ref{thm:monotone-minimizer-for-perimeter} and Corollary~\ref{cor:minimizer-for-perimeter} combined with \cite[Theorem~4.3 and Proposition~5.8]{MR2729270} gives a rather short, and simplier, proof of the classification up to $\mu$-null sets of monotone subsets of the first Heisenberg group $\bbH$ \cite[Theorem~5.1]{MR2729270}. Indeed, let $E$ be a monotone subset of $\bbH$.  Then \cite[Proposition~5.8]{MR2729270}, which is the analogue of Lemma~\ref{lem:main} in the Heisenberg setting, implies that $\Int_\mu(E)$ is precisely monotone and therefore can be described as in \cite[Theorem~4.3]{MR2729270}. By Theorem~\ref{thm:monotone-minimizer-for-perimeter} and Corollary~\ref{cor:minimizer-for-perimeter} we know that $\Int_\mu(E)$ is equivalent to $E$ and it thus follows that $E$ is equivalent to either $\emptyset$, or $\bbH$, or a half-space. More generally, when condition~(H) holds, for instance in any step-2 Carnot group, one can reduce the classification up to $\mu$-null sets of monotone sets to the description of precisely monotone sets. This latter task is in principle easier than the study of monotone sets, but it is however still rather delicate and is known so far in only few other settings beyond the Heisenberg case, see~\cite{https://doi.org/10.48550/arxiv.2106.13490}. 
\end{remark}

\bibliography{bibliography} 
\bibliographystyle{plain}

\end{document}